\pgfplotsset{compat=1.11}
\theoremstyle{plain}
\newtheorem{theorem}{Theorem}[section]
\newtheorem{lemma}[theorem]{Lemma}
\newtheorem{prop}[theorem]{Proposition}
\newtheorem{cor}[theorem]{Corollary}
\newtheorem{proposition}[theorem]{Proposition}
\newcommand{\norm}[1]{\left\lVert#1\right\rVert}
\newtheorem{remark}[theorem]{Remark}
\newtheorem{definition}[theorem]{Definition}
\numberwithin{equation}{section}
\begin{document}

\title[Positive Mass Theorem for Initial Data Sets with Corners]
{Positive mass theorem for initial data sets with corners along a hypersurface}
\author{Aghil Alaee}
\address{Aghil Alaee, Center of  Mathematical Sciences and Applications\\
Harvard University, USA}
\email{aghil.alaee@cmsa.fas.harvard.edu}
\author{Shing-Tung Yau}
\address{Shing-Tung Yau, Department of Mathematics\\
	Harvard University, USA}
\email{yau@math.harvard.edu}
\begin{abstract}
We prove positive mass theorem with angular momentum and charges for axially symmetric, simply connected, maximal, complete initial data sets with two ends, one designated asymptotically flat and the other either (Kaluza-Klein) asymptotically flat or asymptotically cylindrical, for 4-dimensional Einstein-Maxwell theory and $5$-dimensional minimal supergravity theory  which metrics fail to be $C^1$ and second fundamental forms and electromagnetic fields fail to be $C^0$ across an axially symmetric hypersurface $\Sigma$. Furthermore, we remove the completeness and simple connectivity assumptions in this result and prove it for manifold with boundary such that the mean curvature of the boundary is non-positive.
\end{abstract}

\maketitle
\section{Introduction}
In 2007, Dain proved a positive mass theorem with angular momentum $\mathcal{J}$ \cite{dain2008proof},
\begin{equation}\label{Dain}
m\geq \sqrt{|\mathcal{J}|},
\end{equation}for 3-dimensional, smooth, axially symmetric, simply connected, complete, maximal initial data sets for vacuum Einstein equations with two ends, one designated asymptotically flat and the other either asymptotically flat or asymptotically cylindrical. Moreover, he proved that the rigidity of inequality \eqref{Dain} holds for the canonical slice of the extreme Kerr spacetime. The main physical motivation of this inequality is by the standard picture of gravitational collapse through the final state conjecture and the weak cosmic censorship conjecture \cite{dain2012geometric}. From then, the main objective of research in this direction was to remove the unnecessary assumptions in the Dain inequality and extend it to different physical theories and higher dimensions. 

The conditions of axial symmetry, vacuum, and two ends are essential to have a non-zero conserved angular momentum. However, Chru\'sciel, Li, and Weinstein \cite{chrusciel2008mass1} extend the proof to initial data sets for multiple black holes with non-negative energy density condition. Moreover,  Dain, Khuri, Weinstein, and Yamada \cite{dain2013lower} replaced the vacuum energy flux condition with vanishing energy flux in the direction of axial symmetry. On the other hand, inequality \eqref{Dain} has also been generalized to the Einstein-Maxwell theory by Costa \cite{costa2010proof}, that is
\begin{equation}\label{in3d}
	m^2\geq \frac{Q^2+\sqrt{Q^4+4\mathcal{J}^2}}{2},
\end{equation}where $Q=\sqrt{Q_e^2+Q_b^2}$ such that $Q_e$ and $Q_b$ are the electric and magnetic charges, respectively. Moreover, the equality holds for canonical slice of the extreme Kerr-Newman spacetime. 

In \cite{schoen2013convexity}, Schoen and Zhou developed an alternative proof of (charged-) Dain inequality using convexity of reduced harmonic energy. Moreover, Zhou \cite{zhou2014} treated the near maximal initial data sets and Cha and Khuri \cite{cha2015deformations} extended the proof of inequality to non-maximal initial data sets assuming a system of equations admits a solution. Then Khuri and Wienstein \cite{khuri2016positive} proved these inequalities with optimal asymptotic decay conditions for multiple charge black hole initial data sets. 

In general, the topological censorship theorem says that the domain of outer communication of asymptotically flat black holes must be simply connected \cite{friedman1993topological,galloway1995topology}. But there is no topological restriction in the inside region of a horizon. This shows that the simple connectivity and two ends assumptions in above inequalities are only technical restriction and not physical features of black holes. Therefore, recently, Khuri, Bryden, and Sokolowsky \cite{bryden2018positive} showed that the strict inequality of \eqref{in3d} is true for initial data sets with minimal axially symmetric boundary $\partial M^3$ without simple connectivity assumption.

In higher dimensions, a generalization of Hawking topology theorem, by Galloway and Schoen \cite{galloway2006generalization}, states that the cross sections of the event horizon of black holes must be positive Yamabe type. In particular, in five dimensions the only admissible horizon topologies are $3$-sphere (or its quotient), $S^1\times S^2$, or connected sum of these cases. For higher dimensional black holes similar to 3-dimensional black holes, we need to impose additional axial symmetry to have well-defined conserved total angular momenta. In order to accommodate the desired amount of axial symmetry as well as asymptotic flatness, the dimension of spacetime must be restricted to five. Therefore, the first author, Khuri, and Kunduri \cite{alaee2017relating} proved a positive mass theorem with angular momentum and charges for black holes with $3$-sphere horizon topology in the minimal supergravity theory,
\begin{equation}
m\geq \frac{27\pi}{8}\frac{\left(\mathcal{J}_1+\mathcal{J}_2\right)^2}{\left(2m+\sqrt{3}|Q|\right)^2}+\sqrt{3}|Q|,
\end{equation}where $\mathcal{J}_i$ are angular momenta corresponding to $U(1)^2$-symmetry and $Q$ is an electric charge. Moreover, equality holds for canonical slice of the extreme charged Myers-Perry spacetime. In particular, if $Q=0$, we derive the inequality in the vacuum \cite{alaee2015proof}, that is
\begin{equation}
m^3\geq \frac{27\pi}{8}\left(|\mathcal{J}_1|+|\mathcal{J}_2|\right)^2,
\end{equation}where the rigidity is canonical slice of the extreme Myers-Perry spacetime. The inequality for black ring with horizon topology $S^1\times S^2$ is only proved in the Einstein theory because of lack of an explicit extreme black hole solution with two nonzero angular momenta in the minimal supergravity. In particular, if the topology of initial data set is $M^4=S^2\times B^2\#\mathbb{R}^4$ and $\mathcal{J}_1\geq \mathcal{J}_2$, where $\mathcal{J}_1$ is angular momentum in direction of $S^1$ and $\mathcal{J}_2$ is angular momentum in the direction of $S^2$, then the first author, Khuri, and Kunduri \cite{alaee2017mass} also proved the following inequality
	\begin{equation*}
m^3\geq \frac{27\pi}{4}|\mathcal{J}_2||\mathcal{J}_1-\mathcal{J}_2|.
\end{equation*}Moreover, if $\mathcal{J}_1> \mathcal{J}_2$, then equality holds if and only if the initial data set arise from the canonical slice of the extreme Pomeransky-Se\'nkov black ring spacetime. A remarkable feature of this result is that the manifold is not doubling of a Cauchy surface in the domain of outer communication. Moreover, the second end has asymptotic topology $S^1\times S^2$ with different decay conditions on the metric components which separate the Kaluza-Klein-asymptotically flat from asymptotically cylindrical.

The purpose of this paper is to prove all of the above inequalities without the assumption of smoothness. Similar to above inequalities, the proof of positive mass theorem by Schoen and the second author was for smooth initial data sets \cite{schoen1979proof,schoen1981proof}. However, Bray showed that the Riemannian positive mass theorem holds for smooth metric which are Lipschitz on minimal hypersurfaces \cite{bray2001proof}. Moreover, Miao \cite{miao2002positive} extend the Riemannian positive mass theorem for metrics with corners across a hypersurface $\Sigma$. On the other hand, in the proof of positivity of quasi-local masses by Shi and Tam \cite{shi2002positive} and Liu and the second author \cite{liu2006positivity}, an analogous result holds for spin Riemannian manifold. This result has been generalized for a class of Lipschitz metrics where the complement of some singular set $S$ of metrics has Minkowski dimension less than $n/2$ by Lee \cite{lee2013positive}. More recently, Lee and LeFloch \cite{lee2015positive}
proved a Riemannian positive mass theorem for spin manifolds which metrics have the Sobolev regularity $C^0\cap W^{1,n}$. For initial data sets with dominant energy condition,  Shibuya \cite{shibuya2018lorentzian} showed that the positive mass theorem still holds if it has a causal corner, i.e., dominant energy condition does not violate along corner. Moreover, the authors and Khuri \cite{alaee2019geometric} extended the Miao smoothing to a gluing of the Jang deformation of initial data sets with dominant energy condition and proved geometric inequalities for spacetime quasi-local masses. A similar smooth gluing of the Jang deformation developed in the proof of a localized spacetime Penrose inequality by the authors and Lesourd \cite{alaee2019localized}.

The present article is concerned with axially symmetric initial data sets with corners across an axially symmetric hypersurface $\Sigma$. We show that an initial data set can be deform to initial data set with $C^2$ metric, $C^1$ second fundamental form, and $C^1$ electromagnetic field such that it is also a solution of Maxwell equations, has vanishing energy flux in the direction of symmetries, and has non-negative energy density. Moreover, the angular momentum and charges are conserved along this deformation and its mass converges to the mass of the initial data set with corners. This is enough to implement the current results and achieve above inequalities for non-smooth initial data sets. Finally, we prove the rigidity cases for all of these inequalities.

\section{Statement of main results}
In order to state the main result we first discuss the appropriate setting and note that we only consider three and four dimensional initial data sets, that is $n=3,4$. An initial data set $(M^n,g,k,E,B)$ of 4-dimensional Einstein-Maxwell theory and 5-dimensional minimal supergravity theory are consists of a $n$-dimensional Riemannian manifold $M^n$, with metric $g$, second fundamental form $k$, electric field $E$, and magnetic $(n-2)$-form $B$ which is a solution of the following constraint equations.
\begin{equation}\label{Hcons}
16\pi\mu=R(g)-|k|^2_g+(\text{tr}_gk)^2-\frac{2}{(n-2)^2}|E|^2_g-\frac{2}{(n-2)^3}|B|^2_g,
\end{equation}
\begin{equation}\label{momeq}
8\pi J=\text{div}_g\left(k-(\text{tr}_gk) g\right)+\frac{2}{(n-2)^2}\star\left(B\wedge E\right),
\end{equation}
\begin{equation}\label{EMfield}
\text{div}_gE=\frac{n-3}{\sqrt{3}}\star\left(B\wedge B\right),\qquad \text{div}_gB=0,
\end{equation}where $\star$ is the Hodge star operator with respect to metric $g$,  $\mu$ is energy density, and $J$ is energy flux of non-electromagnetic fields. To have a conserved angular momentum, we need axially symmetric initial data set. In particular, we have the following definition.
\begin{definition}An initial data set $(M^n,g,k,E,B)$ is axially symmetric if there exists a $U(1)^{n-2}$ subgroup within the group of isometries of the Riemannian manifold $(M^n, g)$ so that  the Lie derivative of initial data set vanishes along axial symmetry, that is
	\begin{equation}
	\mathcal{L}_{\eta_{(l)}}g=\mathcal{L}_{\eta_{(l)}}k=\mathcal{L}_{\eta_{(l)}}E=\mathcal{L}_{\eta_{(l)}}B=0,
	\end{equation}where $\eta_{(l)}$, for $l=1,n-2$, are generators of $U(1)^{n-2}$ group.
\end{definition}The initial data set for an isolated system consist of a Riemannian manifold with asymptotically flat end which means there exists a sub-manifold $M_{\text{end}}\subset M^n$ diffeomorphic to $R^n\,\backslash B_r(0)$ such that in local coordinate on $M_{\text{end}}$ obtained from $R^n\,\backslash B_r(0)$, we have the following fall-off conditions.
\begin{equation}
g_{ij}=\delta_{ij}+o_{s}(r^{-\frac{n-2}{2}}),\qquad \partial g_{ij}\in L^2(M_{\text{end}}),\qquad k_{ij}=o_{s-1}(r^{-\frac{n}{2}}),
\end{equation}
\begin{equation}
B^i,E^i=o_{s-1}(r^{-\frac{n}{2}}),\qquad B_{ij}=o_{s-1}(r^{-\frac{n}{2}}),\qquad  \mu,J,J(\eta_{(l)})\in L^1(M_{\text{end}}),
\end{equation} for some $s\geq 5$. The assumption $s\geq 5$ is needed for existence of the Brill coordinate system \cite{chrusciel2008mass} in three dimensions. These fall-off conditions ensure that the ADM energy, angular momentum, and total electric and magnetic charges are well-defined.
The ADM energy of asymptotically flat initial data sets is defined by
\begin{equation}
m=\frac{1}{16\pi}\int_{S^{n-1}_{\infty}}\left(g_{ij,i}-g_{ii,j}\right)\nu^j
\end{equation}where $S^n_{\infty}$ is a coordinate sphere at infinity with unit outer normal $\nu$. The total angular momentum  of corresponding rotational symmetry $\eta_{(l)}$ is defined by 
\begin{equation}\label{AM}
\mathcal{J}_l=\frac{1}{8\pi}\int_{S^{n-1}_{\infty}}\left(k_{ij}-(\text{tr}_gk)g_{ij}\right)\nu^j\eta^i_{(l)}.
\end{equation}Moreover, the total electric and magnetic charges of initial data sets are defined by the following flux integrals at infinity
\begin{equation}\label{charges}
Q_e=\frac{1}{4(n-2)^2\pi}\int_{S^{n-1}_{\infty}}E_i\nu^i,\qquad Q_b=\frac{1}{4\pi}\int_{S^{2}_{\infty}}B_i\nu^i.
\end{equation}Note that $Q_b$ is only defined for $n=3$. For $n=4$, there is no total magnetic charge because $B$ is a 2-form and we cannot integrate it over $S^3_{\infty}$. However, for black ring initial data set we have $H_2(M^4)\neq 0$ and there exist a local dipole charge, see \cite{alaee2019existence} for definitions of total electric charge and local dipole charge in the minimal supergravity. For non-smooth initial data set we consider the following class with corners along an axially symmetric hypersurface $\Sigma$ which is a generalization of analogous definition for Riemannian manifolds with corners by Miao \cite[Definition 1]{miao2002positive}.
\begin{definition}
	An axially symmetric initial data set  $(\mathcal{G},\mathcal{K},\mathcal{E},\mathcal{B})$ admitting corners across an axially symmetric hypersurface $\Sigma$ is defined to be  $\mathcal{G}=(g_-,g_+)$, $\mathcal{K}=(k_-,k_+)$, $\mathcal{E}=(E_-,E_+)$, and $\mathcal{B}=(B_-,B_+)$  where $(g_-,k_-,E_-,B_-)$ and $(g_+,k_+,E_+,B_+)$ are initial data sets on $\Omega$ and $M\backslash\bar{\Omega}$ respectively such that the metrics are $C^{2}$ up to the boundary and second fundamental form, electric field, and magnetic $(n-2)$-form are $C^1$ up to the boundary. Moreover, they reduced the same metric, charge potentials, and twist potentials on $\Sigma$.
\end{definition}Note that the second fundamental form, electric field, and magnetic $(n-2)$-form can be discontinuous on $\Sigma$. The continuity of potentials are related to conservation of angular momentum and charges for homologous surfaces to $S^{n-1}_{\infty}$. 
\begin{theorem}\label{thm1.2}
	Let $(M^n,\mathcal{G},\mathcal{K},\mathcal{E},\mathcal{B})$ be a n-dimensional axially symmetric, simply connected, maximal initial data set satisfying constraint equations \eqref{Hcons}, \eqref{momeq}, and \eqref{EMfield} and admitting
	corners across an axially symmetric hypersurface $\Sigma$ with two ends, one designated asymptotically flat and the other either asymptotically flat or asymptotically cylindrical. Suppose that
	$\mu\geq 0$ and $J(\eta_{(l)})=0$, for $l=1,n-2$, in $\Omega$ and $M^n\,\backslash\bar{\Omega}$, and 
	\begin{equation}\label{MCJ}
	H_-(\Sigma,g_-)\geq H_+(\Sigma,g_+)
	\end{equation}where $H_-(\Sigma,g_-)$ and $H_-(\Sigma,g_-)$ represent the mean curvature of $\Sigma$ in $(\bar{\Omega}, g_-)$
	and $(M^n\,\backslash{\Omega}, g_-)$ with respect to unit normal vectors pointing to the designated asymptotically flat region.
	\begin{enumerate}[label=(\alph*)]
		\item If $n=3$, then 
		\begin{equation*}
	m^2\geq \frac{Q^2+\sqrt{Q^4+4\mathcal{J}^2}}{2}.
	\end{equation*}Moreover, equality holds if and only if $(\mathcal{G},\mathcal{K},\mathcal{E},\mathcal{B})$ is isometric to the canonical
	slice of an extreme Kerr-Newman spacetime.
	\item If $n=4$ and $H_2(M^4)=0$, then
	\begin{equation*}
	m\geq \frac{27\pi}{8}\frac{\left(\mathcal{J}_1+\mathcal{J}_2\right)^2}{\left(2m+\sqrt{3}|Q|\right)^2}+\sqrt{3}|Q|
	\end{equation*}Moreover, equality holds if and only if $(\mathcal{G},\mathcal{K},\mathcal{E},\mathcal{B})$ is isometric to the canonical
	slice of an extreme charge-Myers-Perry spacetime.
	\item If $n=4$, $\mathcal{E}=\mathcal{B}=0$, $M^4\cong S^2\times B^2\# \mathbb{R}^4$, $\mathcal{J}_1\geq \mathcal{J}_2$, and the second end is either Kaluza-Klein-asymptotically flat or asymptotically cylindrical, then 
	\begin{equation*}
	m^3\geq \frac{27\pi}{4}|\mathcal{J}_2||\mathcal{J}_1-\mathcal{J}_2|
	\end{equation*}Moreover, if $\mathcal{J}_1> \mathcal{J}_2$, then equality holds if and only if $(\mathcal{G},\mathcal{K})$ arise from the canonical slice of an extreme Pomeransky-Se\'nkov black ring spacetime.
\end{enumerate}
\end{theorem}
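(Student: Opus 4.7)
The plan is to reduce each of (a), (b), (c) to its already-established smooth counterpart by constructing, for each small $\delta > 0$, an approximating smooth axially symmetric initial data set $(g_\delta, k_\delta, E_\delta, B_\delta)$ on $M^n$ that (i) coincides with $(\mathcal{G}, \mathcal{K}, \mathcal{E}, \mathcal{B})$ outside a $\delta$-neighborhood of $\Sigma$, (ii) satisfies the full constraint system \eqref{Hcons}--\eqref{EMfield} with $\mu_\delta \geq 0$, $J_\delta(\eta_{(l)}) = 0$, and $\mathrm{tr}_{g_\delta} k_\delta = 0$, (iii) carries the same angular momenta and charges, and (iv) has ADM mass $m_\delta \to m$ as $\delta \to 0$. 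Once such a family is in hand, the inequalities follow by applying the smooth Dain--Costa inequality to each $(g_\delta, k_\delta, E_\delta, B_\delta)$ in case (a) and the five-dimensional inequalities of \cite{alaee2017relating, alaee2015proof, alaee2017mass} in cases (b) and (c), and then passing to the limit $\delta \to 0$.

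The construction is a two-stage procedure in the spirit of Miao's corner smoothing. First, I would mollify $(\mathcal{G}, \mathcal{K}, \mathcal{E}, \mathcal{B})$ in Gaussian coordinates adapted to $\Sigma$, carrying out the mollification only in the normal variable and in orbit coordinates for the $U(1)^{n-2}$ action, which automatically preserves axial symmetry and the identity $J(\eta_{(l)}) = 0$; the hypothesized continuity of the induced metric, twist potentials, and charge potentials across $\Sigma$ makes the twist one-forms and electromagnetic potentials Lipschitz, so the resulting tensors are $C^1$ with unchanged flux integrals at infinity. Under \eqref{MCJ} the jump $H_- - H_+ \geq 0$ appears as a non-negative Dirac-type contribution to the distributional scalar curvature, so $R(g_\delta)$ is bounded below by a function whose negative part is supported in a thin layer around $\Sigma$ and tends to zero in $L^{n/2}$. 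In the second stage I would restore pointwise $\mu_\delta \geq 0$ by a conformal deformation $\tilde g_\delta = u_\delta^{4/(n-2)} g_\delta$ with $u_\delta \to 1$ at infinity, solving a linear conformal-Laplacian equation that absorbs the residual negative part of $R(g_\delta)$, combined with small axially symmetric corrections to $k_\delta, E_\delta, B_\delta$ supported in the same layer and chosen to restore \eqref{momeq} and \eqref{EMfield} via Hodge decomposition on the invariant sector. The decay $u_\delta - 1 = O(r^{-(n-2)})$ then yields $m_\delta \to m$, while the compact support of the corrections preserves all asymptotic flux integrals and hence $\mathcal{J}_l$, $Q_e$, $Q_b$.

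With the smooth family in hand, each $(\tilde g_\delta, \tilde k_\delta, \tilde E_\delta, \tilde B_\delta)$ is fed into the Schoen--Zhou convexity argument for (a) or the renormalized harmonic-energy arguments of \cite{alaee2017relating, alaee2015proof, alaee2017mass} for (b) and (c), yielding the stated inequality at each $\delta$; letting $\delta \to 0$ completes the inequality portion. For rigidity, equality in the limit forces the conformal factors $u_\delta$ to tend to $1$ strongly enough that the corner contribution vanishes, and the equality cases of the smooth inequalities together with compactness of renormalized harmonic maps identify the approximating data with the extreme Kerr--Newman, charged Myers--Perry, or Pomeransky--Se\'nkov slice; unwinding the deformation then forces $H_- = H_+$ and shows that $(\mathcal{G}, \mathcal{K}, \mathcal{E}, \mathcal{B})$ itself is the relevant extreme slice. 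The principal obstacle will be the second stage: producing, within a single family, a deformation that simultaneously restores $\mu_\delta \geq 0$, preserves $J_\delta(\eta_{(l)}) = 0$, satisfies the Maxwell--Chern--Simons system \eqref{EMfield} (in five dimensions with the nonlinear $\star(B \wedge B)$ coupling), and leaves twist and charge potentials continuous so that the conserved quantities do not drift; in case (c) the non-trivial topology $M^4 \cong S^2 \times B^2 \,\#\, \mathbb{R}^4$ and the possibility of an asymptotically cylindrical second end further complicate the elliptic analysis on the $U(1)^2$-invariant sector. The hypothesis \eqref{MCJ} is precisely what makes the linearized problem well-posed, by supplying the correct sign for the corner contribution that the conformal factor must cancel.
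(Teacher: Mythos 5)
Your overall two-stage plan (mollify near $\Sigma$, then conformally correct the energy density and pass to the limit) matches the paper's scaffolding, and you correctly identify the hypothesis \eqref{MCJ} as supplying the right sign for the corner contribution. However, you have left the hardest part as an acknowledged but unresolved obstacle, and the paper's proof turns precisely on an idea that removes it.

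The gap is in your second stage: you propose to mollify the data in Gaussian coordinates and then ``restore \eqref{momeq} and \eqref{EMfield} via Hodge decomposition on the invariant sector,'' i.e., to solve a coupled linear (and in $n=4$ nonlinear, due to the Chern--Simons term $\star(B\wedge B)$) system for small corrections to $k_\delta, E_\delta, B_\delta$ that simultaneously keeps $J_\delta(\eta_{(l)})=0$, maximality, the Maxwell system, and the conserved potentials. As you concede, there is no reason this system is solvable within a single family while also respecting the decay needed for the flux integrals; this is not a technicality one can wave away. The paper sidesteps the problem entirely by first passing from the given axially symmetric maximal data to the associated $(t-\phi^i)$-\emph{symmetric} data $(\bar g,\bar k,\bar E,\bar B)$ (Proposition~\ref{proptphi}): one drops the shift one-forms $A^i_a$ in the Brill form, sets $\pi=\hat E=\hat B=0$, and what remains is \emph{entirely encoded} by the scalar potentials $\Psi=(U,\lambda_{ij},\zeta,\chi,\psi)$ and $\alpha$. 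In that formulation the identities $\mathrm{tr}_{\bar g}\bar k=0$, $\bar J(\eta_{(l)})=0$, and $\mathrm{div}_{\bar g}\bar B=0$, $\mathrm{div}_{\bar g}\bar E=\tfrac{n-3}{\sqrt3}\star(\bar B\wedge \bar B)$ hold \emph{automatically}, because $\bar k,\bar E,\bar B$ are defined from $\Psi$ by exact algebraic/differential formulas (via Lemma~\ref{Lemma4.3}). Mass, angular momenta, and charges are unchanged, and $\bar\mu\ge\mu\ge 0$. One then mollifies the orbit-space metric and the potentials $\omega_\delta$ and simply \emph{defines} $\bar k_\delta,\bar E_\delta,\bar B_\delta$ by the same formulas applied to the mollified potentials; the constraint structure is therefore preserved identically, with no Hodge-theoretic correction step needed (Proposition~\ref{propmain}). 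The conformal factor then only has to fix up the pointwise sign of $\bar\mu_\delta$, exactly as in Miao, and one checks that the conformal scaling of $\mathrm{div}$ and $\star$ keeps the Maxwell and momentum identities (Proposition~\ref{prop3.2}). Without the $(t-\phi^i)$-reduction your proof cannot close.

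Two smaller remarks. First, before smoothing, the paper also cuts at the outermost axially symmetric minimal surface $\Sigma_{\min}$ and doubles across it in cases (a) and (b); this is needed to reduce to the complete two-ended setting where the smooth theorems apply, and you omit this step. Second, for rigidity the paper does not merely invoke ``compactness of renormalized harmonic maps'': it uses the quantitative Schoen--Zhou gap bound \eqref{m2} together with the explicit mass formula \eqref{m1} to force both the harmonic-map distance and the bulk energy to vanish in the limit, then recovers $\alpha=\alpha_0$ by a separate integration-by-parts argument on the orbit space using the mean-curvature matching $H_-=H_+$ derived from a concentration estimate on $\Sigma$; your sketch of the rigidity step is much too thin to be considered a proof.
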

For black ring initial data set, the manifold is $M^4\cong S^2\times B^2\# \mathbb{R}^4$ \cite{alaee2014notes,khuri2018plumbing}. In this manifold axially symmetric hypersurfaces $\Sigma$ are diffeomorphic to $3$-sphere and $S^1\times S^2$. Moreover, the second end has topology $S^1\times S^2$ which can geometrically divide to Kaluza-Klein-asymptotically flat or asymptotically cylindrical.

It should be pointed out that the hypotheses used in Theorem \ref{thm1.2} is strong, but it is necessary to address the rigidity cases. In general, we remove simple connectivity and completeness for non-smooth initial data sets in three and four dimensions and prove strict inequalities which are generalization of \cite{bryden2018positive}.  
\begin{cor}\label{cor1}
Consider the initial data set $(M^n,\mathcal{G},\mathcal{K},\mathcal{E},\mathcal{B})$ in Theorem \ref{thm1.2} that have a boundary $\partial M^n$ with non-positive mean curvature, with respect to unit normal vectors pointing to the designated asymptotically flat region, instead of a second end and without simple connectivity assumption. Assume the outermost minimal surface $\Sigma_{\text{min}}$ in $M^n$ has one component and enclosed by corner $\Sigma$.
\begin{enumerate}[label=(\alph*)]
	\item If $n=3$, then the strict inequality in Theorem \ref{thm1.2}-(a) holds.
	\item If $M^4$ is spin, $\pi_1(\Sigma_{\text{min}})=0$, and $H_2(M^4\,\backslash W)=0$, where $\partial W=\partial M^n\cup\Sigma_{\text{min}}$, then the strict inequality in Theorem \ref{thm1.2}-(b) holds.. 
	\item If $M^4$ is spin, $\mathcal{E}=\mathcal{B}=0$, $\pi_1(\Sigma_{\text{min}})=\mathbb{Z}$, $H_2(M^4\,\backslash W)=\mathbb{Z}$ and $\mathcal{J}_1\geq \mathcal{J}_2$, then the strict inequality in Theorem \ref{thm1.2}-(c) holds. 
\end{enumerate} 
\end{cor}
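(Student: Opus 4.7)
The plan is to reduce Corollary \ref{cor1} to Theorem \ref{thm1.2} through a cut-and-double construction along the outermost minimal surface. The non-positivity of the mean curvature of $\partial M^n$ together with $\mu\geq 0$ secures the existence of the axially symmetric outermost minimal surface $\Sigma_{\text{min}}$, and by hypothesis $\Sigma_{\text{min}}$ has a single component enclosed by $\Sigma$. Let $\hat M:=M^n\setminus W$ denote the region outside $\Sigma_{\text{min}}$; then $\hat M$ still contains the corner $\Sigma$ and has $\Sigma_{\text{min}}$ as an interior minimal boundary.

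First I would apply a localized version of the smoothing deformation constructed in this paper to the corner at $\Sigma$ inside $\hat M$, preserving $\mu\geq 0$, $J(\eta_{(l)})=0$, the Maxwell equations, and the ADM energy, angular momenta and charges. Since the deformation is supported in a neighborhood of $\Sigma$ and $\Sigma_{\text{min}}$ is enclosed by $\Sigma$, the boundary $\Sigma_{\text{min}}$ remains a smooth minimal surface and the asymptotically flat end is unaffected. Next I would double the resulting $\hat M$ across $\Sigma_{\text{min}}$ to form a complete initial data set $M'$ carrying two asymptotically flat ends (in case (c), the second end inherits the Kaluza--Klein asymptotically flat or asymptotically cylindrical profile). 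On $M'$ the only remaining corner is $\Sigma_{\text{min}}$, along which the mean-curvature condition \eqref{MCJ} holds trivially because $H_\pm=0$ on both sides. The topological hypotheses in each case are exactly what is needed for $M'$ to match the setup of Theorem \ref{thm1.2}: in (a), doubling a three-manifold across a topological two-sphere minimal surface produces a simply connected manifold; in (b), $\pi_1(\Sigma_{\text{min}})=0$ together with $H_2(M^4\setminus W)=0$ and a van Kampen/Mayer--Vietoris argument give $M'$ simply connected with vanishing second homology; in (c), $\pi_1(\Sigma_{\text{min}})=\mathbb{Z}$ together with $H_2(M^4\setminus W)=\mathbb{Z}$ reproduces the black-ring topology $S^2\times B^2\#\mathbb{R}^4$ on $M'$.

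Applying Theorem \ref{thm1.2} to $M'$ yields the non-strict inequality for the ADM energy, angular momenta and charges measured at the designated end, which are unchanged by the cut-and-double construction. For strictness I would invoke the rigidity clauses of Theorem \ref{thm1.2}: an equality forces $M'$ to be isometric to the canonical slice of an extreme Kerr--Newman, charged Myers--Perry, or Pomeransky--Se\'nkov spacetime. Each such extreme slice carries a single asymptotically flat end paired with an asymptotically cylindrical end at the degenerate horizon, and in particular is not reflection symmetric across a compact minimal surface with two isometric asymptotically flat ends. This incompatibility with the doubled structure of $M'$ rules out equality, so the inequality must be strict.

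The main obstacle I expect is the topological reduction, particularly case (c), where one must check that the doubling correctly reproduces the $S^2\times B^2\#\mathbb{R}^4$ topology with the correct asymptotic profile at the second end; this requires combining the plumbing descriptions in \cite{alaee2014notes, khuri2018plumbing} with the hypotheses $\pi_1(\Sigma_{\text{min}})=\mathbb{Z}$ and $H_2(M^4\setminus W)=\mathbb{Z}$. A secondary technical point is confirming that the localized smoothing near $\Sigma$ can be arranged to leave $\hat M$ untouched near $\Sigma_{\text{min}}$ and near infinity, so that the minimal character of $\Sigma_{\text{min}}$ and the asymptotic decay rates are preserved verbatim; this should follow from the compactly supported nature of the deformation produced in the proof of Theorem \ref{thm1.2}.
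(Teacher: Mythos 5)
Your cut-and-double scheme is the right starting point and matches the paper for cases (a) and (b), and your observation that equality would force an extreme slice with a cylindrical end (incompatible with a reflection-symmetric double with two isometric asymptotically flat ends) is a perfectly good way to see strictness, essentially equivalent to the paper's remark that the sharp inequality requires a cylindrical second end. However, there are two genuine gaps.

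First, in four dimensions you need simple connectivity of $M^4\setminus W$ before any van Kampen/Mayer--Vietoris bookkeeping on the double can help: doubling across $\Sigma_{\text{min}}$ gives $\pi_1(M')=\pi_1(M^4\setminus W)*_{\pi_1(\Sigma_{\text{min}})}\pi_1(M^4\setminus W)$, so knowing $\pi_1(\Sigma_{\text{min}})$ alone tells you nothing unless you already know $\pi_1(M^4\setminus W)=0$. The paper establishes this by a residual-finiteness argument: a nontrivial finite cover of $M^4\setminus W$ would have several asymptotically flat ends, hence, by a barrier argument, a minimal surface not contained in the boundary, whose projection contradicts the outermost property of $\Sigma_{\text{min}}$. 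Only after this, combined with the $U(1)^2$ action, spin, and the Orlik--Raymond and Hollands--Ishibashi classifications, does the paper identify $M^4\setminus W$ as (minus boundary pieces) $\mathbb{R}^4\# l(S^2\times S^2)$, and then the hypotheses on $\pi_1(\Sigma_{\text{min}})$ and $H_2(M^4\setminus W)$ pin down the exact topology. Your sketch skips all of this; the covering-space argument is the key missing idea. (In three dimensions the analogous role is played by Huisken--Ilmanen's topological lemma, which you also implicitly need before you can assert the double is simply connected.)

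Second, case (c) does not go through doubling at all, and in fact your doubling destroys the topology you need. If $M^4\setminus W\cong \mathbb{R}^4\#(S^2\times B^2)$ with $\partial=\Sigma_{\text{min}}\cong S^1\times S^2$, its double is $\mathbb{R}^4\#(S^2\times S^2)\#\mathbb{R}^4$, which is not the black-ring topology $S^2\times B^2\#\mathbb{R}^4$ required by Theorem~\ref{thm1.2}(c). There is also no reason the second end of the double should be Kaluza--Klein asymptotically flat or asymptotically cylindrical; it is just a mirror copy of the original asymptotically flat $\mathbb{R}^4$-type end. The paper avoids this by \emph{not} doubling in case (c): it keeps the manifold-with-minimal-boundary $\mathbb{R}^4\#(S^2\times B^2)$, smooths the corner at $\Sigma$ as in Propositions~\ref{propmain} and~\ref{Prop3.1}, and then applies the black-ring mass--angular-momentum result of~\cite{alaee2017mass} directly to this manifold with boundary. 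You should replace the doubling step in (c) with this direct application.
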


The organization of this paper is as follows. In Section \ref{sec2}, first we construct potentials for our initial data sets. Then we show any axially symmetric maximal initial data sets have related $(t-\phi^i)$ symmetric initial data sets with same mass, angular momentum, and charges. Furthermore, we deform initial data set to construct a $C^2$ metric and $C^1$ second fundamental, electric field and magnetic $(n-2)$-form such that it is a solution of Maxwell equations and has vanishing energy flux in the direction of axial symmetry. In section \ref{sec3}, we construct a conformal transformation of initial data set such that the conformal data has non-negative energy density as well as to be a solution of Maxwell equations and has vanishing energy flux in the direction of axial symmetry. Finally, we prove the main results.
\section{Smooth Deformations of initial data sets across $\Sigma$}\label{sec2}
In this section, we assume $M^n$ is a simply connected, asymptotically flat Riemannian manifold with two ends. Before deforming the initial data smoothly we need to construct potentials that characterize angular momentums and charges of initial data sets and we have two remarks about our setting.
\begin{remark}\label{rem3.1}The Hodge star operator $\star$ is an isomorphism from $p$-form on $(M,g)$ to $(p-1)$-form and defined by $\alpha\wedge\star\beta=\left<\alpha,\beta\right>\star 1=\left<\alpha,\beta\right> dV_g$ for $p$-forms $\alpha$ and $\beta$. In particular, $\star^2\alpha=(-1)^{p(n-p)}\alpha$, $\iota_X\star\alpha=\star\left(\alpha\wedge X\right)$, and $\text{div}_{g}X=(-1)^{n+1}\star d\star X$, where $X$ is a vector field and for dual 1-form we use same notation.
\end{remark} 
\begin{remark}In order to analyze three and four dimensional initial data sets together, we define vectors with components 1 and $n-2$ for 1-forms, vector fields, and functions. In particular, we write the vector $\eta=(\eta_{(1)},\eta_{(n-2)})^T$ for generators of $U(1)^{n-2}$ symmetry of $n$-dimensional initial data sets. Then if $n=3$, $\eta=\eta_{(1)}$ and if $n=4$ we have $\eta=(\eta_{(1)},\eta_{(2)})^T$. 
\end{remark}
\subsection{Potentials}Consider the initial data set $(M^n,\mathcal{G},\mathcal{K},\mathcal{E},\mathcal{B})$ on region ${\Omega}$ and $M^n\,\backslash\bar{\Omega}$. Define the following $1$-form $\Upsilon_{\pm}=(\Upsilon^1_{\pm},\Upsilon^{n-2}_{\pm})^T$ from magnetic $(n-2)$-form $B_{\pm}$ as following
\begin{equation}
\Upsilon_{\pm}=\iota_{\eta}\star_{\pm} B_{\pm}=\left(\iota_{\eta_{(1)}}\star_{\pm} B_{\pm},\iota_{\eta_{(n-2)}}\star_{\pm} B_{\pm}\right)^T,
\end{equation}where  $\iota$
is the interior product and $\star_{\pm}$ is the Hodge star operation with respect to metrics $g_{\pm}$. Then the divergence free property of the magnetic $n-2$-form equation \eqref{EMfield} and Cartan's magic formula shows that the 1-form $\Upsilon$ is closed and by the Poincar\'e lemma, it is exact. Thus there exist potentials
\begin{equation}\label{eqpsi}
d\psi_{\pm}=\left(d\psi^1_{\pm},d\psi^{n-2}_{\pm}\right)^T=\Upsilon_{\pm}.
\end{equation}Since $B_{\pm}$ is invariant under the $U(1)^{n-2}$ symmetry, these potentials are invariant under $U(1)^{n-2}$ symmetry. We define another $1$-form $\mathcal{Q}_{\pm}=\iota_{\eta_{(n-2)}}\iota_{\eta_{(1)}}\star_{\pm} E_{\pm}-\frac{n-3}{\sqrt{3}}\psi_{\pm}^T\mathfrak{J}d\psi_{\pm}$ where 
\begin{equation}
\mathfrak{J}=\left(
\begin{array}{cc}  0 & 1 \\
-1 & 0  \end{array} \right).
\end{equation}Observe that vector $\psi^T\mathfrak{J}$ is orthogonal to vector $\psi$. Then with Cartan's magic formula, Remark \ref{rem3.1}, and Maxwell equations \eqref{EMfield}, we have
\begin{equation}
\begin{split}
d\mathcal{Q}_{\pm}&=d\iota_{\eta_{(n-2)}}\iota_{\eta_{(1)}}\star_{\pm} E_{\pm}-\frac{n-3}{\sqrt{3}}d\left(\psi_{\pm}^T\mathfrak{J}d\psi_{\pm}\right)\\
&=\iota_{\eta_{(n-2)}}\iota_{\eta_{(1)}}d\star_{\pm} E_{\pm}-\frac{n-3}{\sqrt{3}}d\left(\psi_{\pm}^T\mathfrak{J}d\psi_{\pm}\right)\\
&=\iota_{\eta_{(n-2)}}\iota_{\eta_{(1)}}\left(-\frac{n-3}{\sqrt{3}}B_{\pm}\wedge B_{\pm}\right)-\frac{n-3}{\sqrt{3}}d\left(\psi_{\pm}^T\mathfrak{J}d\psi_{\pm}\right)\\
&=-\frac{n-3}{\sqrt{3}}\left(\iota_{\eta_{(n-2)}}\iota_{\eta_{(1)}}\left(B_{\pm}\wedge B_{\pm}\right)+d\left(\psi_{\pm}^T\mathfrak{J}d\psi_{\pm}\right)\right),
\end{split}
\end{equation}where we take $\star_{\pm}$ of Maxwell equation \eqref{EMfield} to get third line. For $n=3$, this expression vanishes. Assuming $n=4$, then using $\star_{\pm} B_{\pm}\wedge \star_{\pm}B_{\pm}=B_{\pm}\wedge B_{\pm}$ for a $2$-form on $4$-manifold and definition of $\psi_{\pm}$ in \eqref{eqpsi}, the expression also vanishes. Hence $\mathcal{Q}_{\pm}$ is closed and there exist a global electric potential $\chi_{\pm}$ so that
\begin{equation}\label{chi}
d\chi_{\pm}=\iota_{\eta_{(n-2)}}\iota_{\eta_{(1)}}\star_{\pm} E_{\pm}-\frac{n-3}{\sqrt{3}}\psi_{\pm}^T\mathfrak{J}d\psi_{\pm}.
\end{equation}Moreover, it immediately follows that this potential is invariant under the $U(1)^{n-2}$ symmetry. Next we define another $1$-form 
\begin{equation}
\Xi_{\pm}=(n-2)\star_{\pm}(k_{\pm}(\eta)\wedge \eta_{(1)}\wedge \eta_{(n-2)})-\psi_{\pm}\left(d\chi_{\pm}+\frac{n-3}{3\sqrt{3}}\psi_{\pm}^T\mathfrak{J}d\psi_{\pm}\right)-\left(n-4\right)\chi_{\pm}d\psi_{\pm}
\end{equation}Then using again Cartan's magic formula, Remark \ref{rem3.1}, $J(\eta_{(l)})=0$, the momentum constraint equation \eqref{momeq}, and \eqref{chi}, we have 
\begin{equation}
\begin{split}
d\Xi_{\pm}&=(n-2)d\star_{\pm}(k_{\pm}(\eta)\wedge \eta_{(1)}\wedge \eta_{(n-2)})-d\psi_{\pm}\wedge\left(d\chi_{\pm}+\frac{n-3}{\sqrt{3}}\psi_{\pm}^T\mathfrak{J}d\psi_{\pm}\right)\\
&\quad-\left(n-4\right)d\chi_{\pm}\wedge d\psi_{\pm}\\
&=(n-2)\iota_{\eta_{(n-2)}}\iota_{\eta_{(1)}}\star_{\pm}\star_{\pm}d\star_{\pm}k_{\pm}(\eta)-d\psi_{\pm}\wedge\left(d\chi_{\pm}+\frac{n-3}{\sqrt{3}}\psi_{\pm}^T\mathfrak{J}d\psi_{\pm}\right)\\
&\quad-\left(n-4\right)d\chi_{\pm}\wedge d\psi_{\pm}\\
&=\frac{-2}{(n-2)}\iota_{\eta_{(n-2)}}\iota_{\eta_{(1)}}\left(\star_{\pm} E_{\pm}\wedge d\psi_{\pm}\right)-d\psi_{\pm}\wedge\left(d\chi_{\pm}+\frac{n-3}{\sqrt{3}}\psi_{\pm}^T\mathfrak{J}d\psi_{\pm}\right)\\
&\quad-\left(n-4\right)d\chi_{\pm}\wedge d\psi_{\pm}\\
&=\frac{-2}{(n-2)}\left(d\chi_{\pm}+\frac{n-3}{\sqrt{3}}\psi_{\pm}^T\mathfrak{J}d\psi_{\pm}\right)\wedge d\psi_{\pm}-d\psi_{\pm}\wedge \left(d\chi_{\pm}+\frac{n-3}{\sqrt{3}}\psi_{\pm}^T\mathfrak{J}d\psi_{\pm}\right)\\
&\quad-\left(n-4\right)d\chi_{\pm}\wedge d\psi_{\pm}=0
\end{split}
\end{equation}Hence, there exist twist potentials
\begin{equation}
d\zeta_{\pm}=\Xi_{\pm}
\end{equation} where $\zeta_{\pm}=(\zeta^1_{\pm},\zeta^{n-2}_{\pm})^T$. We assume the potentials are continuous on $\Sigma$ and define the following functions 
\begin{equation}
\zeta:=\begin{cases}
\zeta_- & \text{on}\quad \Omega\\
\zeta_-=\zeta_+& \text{on}\quad \Sigma\\
\zeta_+ & \text{on}\quad M\backslash\bar{\Omega}
\end{cases},\quad \psi:=\begin{cases}
\psi_- & \text{on}\quad \Omega\\
\psi_-=\psi_+& \text{on}\quad \Sigma\\
\psi_+ & \text{on}\quad M\backslash\bar{\Omega}
\end{cases},
\end{equation} and
\begin{equation}
\chi:=\begin{cases}
\chi_- & \text{on}\quad \Omega\\
\chi_-=\chi_+& \text{on}\quad \Sigma\\
\chi_+ & \text{on}\quad M\backslash\bar{\Omega}
\end{cases}.
\end{equation}In particular, a computation using definition of angular momentum \eqref{AM} and charges \eqref{charges} shows that, see \cite{alaee2017relating,dain2013lower} for details,
\begin{equation}\label{AmCrelation}
\mathcal{J}_l=\frac{\pi^{n-3}}{4}\left(\zeta_l\vert_{\Gamma_-}-\zeta_l\vert_{\Gamma_+}\right),\quad Q_{e}=\frac{\pi^{n-3}}{2^{n-2}}\left(\chi\vert_{\Gamma_-}-\chi\vert_{\Gamma_+}\right),\quad Q_{b}=\frac{1}{2}\left(\psi\vert_{\Gamma_-}-\psi\vert_{\Gamma_+}\right),
\end{equation} where $\Gamma_+$ and $\Gamma_-$ are two asymptotic parts of axis of rotation $\Gamma$. Since the value of potentials at axis is the same in $\bar{\Omega}$ and $M^n\,\backslash{\Omega}$, charges and angular momentum are conserve quantities for any surface homologous to coordinate sphere $S^{n-1}$ at infinity of $M^n\,\backslash {\Omega}$. 
\subsection{$(t-\phi^i)$-symmetric data}In this section, we show that every axially symmetric maximal data set has related $(t-\phi^i)$-symmetric data with same mass, angular momentum, and charges. Consider axially symmetric, $C^2$ initial data set $(M^n,g,k,E,B)$ with two ends. Then there exist a global Brill coordinate $(\rho,z,\phi^l)$ such that $\eta_{(l)}=\frac{\partial}{\partial\phi^l}$. This has been proved in three dimensions \cite{chrusciel2008mass} and it is conjectured to be true in four dimensions \cite{alaee2015proof}. In particular, the metric of axially symmetric initial data sets takes the form
\begin{equation}\label{axialmetric}
{g}=e^{-2U+2\alpha}\left(d\rho^2+dz^2\right)+e^{-2U}{\lambda}_{ij}\left(d\phi^i+A_{a}^idy^a\right)\left(d\phi^j+A_{a}^jdy^a\right),
\end{equation}for some functions $U$, $\alpha$, $A^i_{a}$, and a symmetric positive definite matrix ${\lambda}=[{\lambda}_{ij}]$ with $\det{\lambda}=\rho^2$, all independent of $(\phi^{1},\phi^{n-2})$ with asymptotic fall-off in \cite{alaee2015proof,khuri2016positive}. Note that $\alpha$ in equation \eqref{axialmetric} is equal to ${\alpha}-\log(2\sqrt{\rho^2+z^2})$ in \cite{alaee2015proof}. Moreover, the coordinates should take values in the following ranges $\rho\in[0,\infty)$, $z\in \mathbb{R}$, and $\phi^i\in [0,2\pi]$, for $i=1,n-2$. The transformation to spherical coordinate in three dimensions is $\rho=r\sin\theta$ and $z=r\cos\theta$ and in four dimensions is $\rho=\frac{r^2}{2}\sin 2\theta$ and $z=\frac{r^2}{2}\cos 2\theta$. Consider the global frame
\begin{equation}\label{frame1}
e_1= e^{U-\alpha}\left(\partial_\rho - A^{i}_{\rho} \partial_{\phi^i}\right), \text{ }\text{ }\text{ }e_2 = e^{U-\alpha}\left(\partial_z - A^{i}_{z} \partial_{\phi^i}\right),\text{ }\text{ }\text{ }
e_{i+2} = e^{U} \partial_{\phi^i},\text{ }\text{ }
\end{equation}
for $i=1,n-2,$ with dual co-frame
\begin{equation}\label{coframe1}
\theta^1= e^{-U+\alpha}d\rho,\text{ } \text{ } \text{ } \text{ } \text{ } \text{ }
\theta^2= e^{-U+\alpha}dz,\text{ }\text{ }\text{ }\text{ }\text{ }\text{ }
\theta^{i+2}=e^{-U}\left( d\phi^i+A^i_a d y^a\right),\text{ }\text{ }\text{ }\text{ }
\end{equation}for $i=1,n-2.$ For arbitrary axially symmetric function $\omega=(\omega^1,\omega^{n-2})^T$, we define a 1-form
\begin{equation}
{\mathcal{P}}^{\omega}=\frac{1}{\det \Lambda}\star\left(d\omega\wedge\eta_{(1)}\wedge\eta_{(n-2)}\right)
\end{equation}
where ${\mathcal{P}}^{\omega}=\left({\mathcal{P}}^{\omega}_1,{\mathcal{P}}^{\omega}_{n-2}\right)^T$, $\Lambda=[\Lambda_{ij}]=e^{-2U}[\lambda_{ij}]$ and $\star$ is Hodge star operation with respect to $g$. Then we have the following decomposition for  electric 1-form and second fundamental form
\begin{equation}\label{E}
E=\mathcal{P}^{\chi}+\frac{n-3}{\sqrt{3}}\psi^T\mathfrak{J}\mathcal{P}^{\psi}+\hat{E},
\end{equation}
\begin{equation}\label{extrin}
k(e_i,e_j)=\frac{1}{(n-2)}\left(\eta^T(e_i)\Lambda^{-1}{\mathcal{P}}(e_j)+\eta^T(e_j)\Lambda^{-1}{\mathcal{P}}(e_i)\right)+{\pi}(e_i,e_{j+2}),
\end{equation}
where $i,j=1,\cdots,4$, $\pi(e_k,e_{l+2})=0$, $\hat{E}(e_k)=0$ for $k,l=1,2$ and
\begin{equation}
\mathcal{P}=\mathcal{P}^{\zeta}+\psi\left(\mathcal{P}^{\chi}+\frac{n-3}{3\sqrt{3}}\psi^T\mathfrak{J}\mathcal{P}^{\psi}\right)+\left(n-4\right)\chi\mathcal{P}^{\psi}
\end{equation}For magnetic $(n-2)$-form we have different decompositions related to the dimension. If $n=3$, then we have 
\begin{equation}\label{B1}
B=\mathcal{P}^{\psi}+\hat{B},
\end{equation} where $\hat{B}(e_k)=0$ for $k=1,2$. If $n=4$, the Killing symmetry implies $\star B(\eta_{(i)},\eta_{(j)})=0$ which leads to $B(e_1,e_2)=0$, and 
\begin{equation}\label{B2}
B=B(e_1,e_{k+2})\theta^1\wedge\theta^{k+2}+B(e_2,e_{k+2})\theta^2\wedge\theta^{k+2}+\hat{B}(e_3,e_{4})\theta^3\wedge\theta^{4},
\end{equation}such that \begin{equation}\label{Bbase}
B(e_1,e_{i+2})(t)=-e^{2U-\alpha}{\epsilon}^j{}_i\partial_{z}\psi^j(t),\qquad B(e_2,e_{i+2})(t)=e^{2U-\alpha}{\epsilon}^j{}_i\partial_{\rho}\psi^j(t),
\end{equation} where ${\epsilon_{\delta}}_{ij}=\sqrt{\det\lambda}\varepsilon_{ij}$ is volume form associated to $\lambda$ and $\varepsilon_{ij}=0,\pm 1$.

In general, the 2-dimensional distribution $\mathcal{D}^2$ orthogonal to $\eta_{(l)}$ is not integrable for axially symmetric initial data sets. However, the Killing vector fields $\eta_{(l)}$ have 0-dimensional and 1-dimensional fix points and assuming $Ric_g(\eta,\partial_{y^a})=0$, for $a=1,2$ and $(y^1,y^2)=(\rho,z)$,  the result of Wald \cite[Theorem 7.1.1]{wald2010general} implies $\mathcal{D}^2$ is integrable. In particular, if $\mathcal{D}^2$ is integrable, we have the following divergence-free one-form, see \cite[Section 2.2]{alaee2014small} for 4-dimensional case and \cite[Appendix C]{wald2010general} for 3-dimensional case. 
\begin{lemma}\label{Lemma4.3} Let $(M,\bar{g})$ be a $n$-dimensional Riemannian manifold with $U(1)^{n-2}$ isometry subgroup for $n=3,4$. If $2$-dimensional distribution $\mathcal{D}^2$ orthogonal to $\eta_{(l)}$ is integrable, then the following 1-form is divergence free
	\begin{equation}
	{\mathcal{P}}^{\omega}=\frac{1}{\det \Lambda}\star_{\bar{g}}\left(d\omega\wedge\eta_{(1)}\wedge\eta_{(n-2)}\right),
	\end{equation}where ${\mathcal{P}}^{\omega}=({\mathcal{P}}^{\omega_1},{\mathcal{P}}^{\omega_{n-2}})^T$, $\Lambda=[\Lambda_{ij}]=[\bar{g}(\eta_{(i)},\eta_{(j)})]$ is Gram matrix of the Killing fields, and $\omega=(\omega_1,\omega_{n-2})^T$ is axially symmetric function.
\end{lemma}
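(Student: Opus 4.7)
The plan is to exploit the integrability of $\mathcal{D}^2$ to pass to local coordinates in which $\bar g$ is block-diagonal; in such coordinates the claim reduces to recognizing that $\star_{\bar g}\mathcal{P}^\omega$ is, up to sign, a wedge of closed $1$-forms. Since the stated $\omega$ is vector-valued but $\mathcal{P}^\omega$ is defined componentwise, it suffices to argue one component at a time, so I treat $\omega$ below as a single axially symmetric function.

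\textbf{Step 1 (block-diagonal chart).} Fix a regular point $p$ where the $\eta_{(l)}$ are linearly independent, let $L_p$ be the leaf of the $\mathcal{D}^2$-foliation through $p$, and choose coordinates $(y^1,y^2)$ on a neighborhood $U\subset L_p$. Parametrize the orbit directions via the joint Killing flow
\[
\Psi(y,\phi) \;=\; \Phi^{\phi^1}_{\eta_{(1)}}\circ\cdots\circ\Phi^{\phi^{n-2}}_{\eta_{(n-2)}}(\iota(y)),
\]
so that $\partial_{\phi^l}=\eta_{(l)}$. Because each $\eta_{(m)}$ is an isometry commuting with the other $\eta_{(l)}$, its flow preserves $\mathrm{span}\,\eta_{(l)}$ and its orthogonal complement $\mathcal{D}^2$; hence $\partial_{y^a}$ remains in $\mathcal{D}^2$ throughout the chart, yielding $\bar g(\partial_{y^a},\eta_{(l)})=0$ and the block-diagonal form
\[
\bar g \;=\; h_{ab}(y)\,dy^a\,dy^b \;+\; \Lambda_{ij}(y)\,d\phi^i\,d\phi^j.
\]

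\textbf{Step 2 (key computation).} In this chart $\eta_{(i)}^{\flat}=\Lambda_{ij}d\phi^j$, so
\[
\eta_{(1)}\wedge\cdots\wedge\eta_{(n-2)} \;=\; \det\Lambda \;\, d\phi^1\wedge\cdots\wedge d\phi^{n-2},
\]
and axial symmetry gives $d\omega=\partial_a\omega\,dy^a$. Therefore
\[
d\omega\wedge\eta_{(1)}\wedge\cdots\wedge\eta_{(n-2)} \;=\; \det\Lambda \;\, d\omega\wedge d\phi^1\wedge\cdots\wedge d\phi^{n-2}.
\]
Applying $\star_{\bar g}$, dividing by $\det\Lambda$ to form $\mathcal{P}^\omega$, and applying $\star_{\bar g}$ once more via the identity $\star\star\alpha=(-1)^{p(n-p)}\alpha$ at $p=n-1$ (Remark \ref{rem3.1}) yields
\[
\star_{\bar g}\mathcal{P}^\omega \;=\; (-1)^{n-1}\, d\omega\wedge d\phi^1\wedge\cdots\wedge d\phi^{n-2}.
\]
The crucial gain is that $\det\Lambda$ cancels, leaving an expression built purely from closed $1$-forms.

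\textbf{Step 3 (conclusion).} Exterior differentiation annihilates the right-hand side, so $d\,\star_{\bar g}\mathcal{P}^\omega=0$. The divergence formula from Remark \ref{rem3.1} then gives $\mathrm{div}_{\bar g}\mathcal{P}^\omega=(-1)^{n+1}\star_{\bar g}d\,\star_{\bar g}\mathcal{P}^\omega=0$ wherever the Killings are linearly independent, and smoothness of $\mathcal{P}^\omega$ extends the identity across the lower-dimensional fixed-point locus. The principal obstacle is Step 1: one must verify that integrability of $\mathcal{D}^2$ together with the commuting Killing structure really does produce a chart making $\bar g$ block-diagonal with $\partial_{\phi^l}=\eta_{(l)}$. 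Once this structural reduction is in hand, the remaining manipulations are routine exterior algebra.
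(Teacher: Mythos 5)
Your proof is correct, but it takes a genuinely different route from the paper's. The paper works invariantly: it quotes the Frobenius consequence $\nabla^i\eta^j=\tfrac12\left(\nabla^i\Lambda\,\Lambda^{-1}\eta^j-\nabla^j\Lambda\,\Lambda^{-1}\eta^i\right)$, deduces from it the index identity
$\epsilon_{ijkl}\nabla^i\eta^j_{(1)}\eta^k_{(n-2)}+\epsilon_{ijkl}\eta^j_{(1)}\nabla^i\eta^k_{(n-2)}=\nabla^i\log\det\Lambda\,\epsilon_{ijkl}\eta_{(1)}^j\eta_{(n-2)}^k$, and then expands $\mathrm{div}_{\bar g}\mathcal{P}^\omega$ term by term and watches everything cancel. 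You instead pass to an adapted chart: integrability of $\mathcal D^2$ plus the commuting Killing flows lets you write $\bar g=h_{ab}\,dy^a dy^b+\Lambda_{ij}\,d\phi^i d\phi^j$ with $\partial_{\phi^l}=\eta_{(l)}$, in which $\eta_{(1)}^\flat\wedge\cdots\wedge\eta_{(n-2)}^\flat=\det\Lambda\,d\phi^1\wedge\cdots\wedge d\phi^{n-2}$. The prefactor $1/\det\Lambda$ then cancels exactly, so $\star_{\bar g}\mathcal{P}^\omega=\pm\,d\omega\wedge d\phi^1\wedge\cdots\wedge d\phi^{n-2}$, which is closed on the nose; the divergence vanishes by $\mathrm{div}_{\bar g}=(-1)^{n+1}\star d\star$ and extends across the axis by continuity. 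The trade-off: the paper's argument never leaves the invariant calculus and so it applies verbatim without setting up a chart, while yours is shorter and structurally more transparent, since it isolates the single conceptual fact that does all the work -- the $\det\Lambda$ cancellation rendering $\star\mathcal{P}^\omega$ a wedge of closed $1$-forms. Note that the cancellation argument implicitly relies on this block-diagonal form; in general Brill coordinates the cross terms $A^i_a\,dy^a$ spoil it, which is exactly where integrability is used. One small caveat that applies to both proofs alike: the identity is first established on the open dense set where the Killing fields are linearly independent and $\det\Lambda>0$, and extended to the axis by smoothness of $\mathcal{P}^\omega$, which is not an extra gap in your version.
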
 
\begin{proof}Since $2$-dimensional distribution orthogonal to $\eta_{(l)}$ is integrable, by Frobenius theorem we have $\nabla^i\eta^j=\frac{1}{2}\left(\nabla^i\Lambda\Lambda^{-1}\eta^j-\nabla^j\Lambda\Lambda^{-1}\eta^i\right)$, where $\nabla$ is covariant derivative with respect to $\bar{g}$ and $\eta=(\eta_1,\eta_{n-2})^T$. A straightforward computation then shows that
	\begin{equation}
	\epsilon_{ijkl}\nabla^i\eta^j_{(1)}\eta^k_{(n-2)}+\epsilon_{ijkl}\eta^j_{(1)}\nabla^i\eta^k_{(n-2)}=\nabla^i\log\det\Lambda \epsilon_{ijkl}\eta_{(1)}^j\eta_{(n-2)}^k,
	\end{equation}where $\epsilon_{ijkl}$ is volume form with respect to $\bar{g}$. It follows that
	\begin{equation}
	\begin{split}
	\text{div}_{\bar{g}}{\mathcal{P}}^{\omega}&=-\frac{1}{\det\Lambda}\nabla^i\log\det\Lambda \epsilon_{ijkl}\eta_{(1)}^j\eta_{(n-2)}^k\nabla^l\omega+\frac{1}{\det\Lambda}\epsilon_{ijkl}\nabla^i\eta^j_{(1)}\eta_{(n-2)}^k\nabla^l\omega\\
	&\quad+\frac{1}{\det\Lambda}\epsilon_{ijkl}\eta^j_{(1)}\nabla^i\eta_{(n-2)}^k\nabla^l\omega
	+\frac{1}{\det\Lambda}\epsilon_{ijkl}\eta_{(1)}^j\eta_{(n-2)}^k\nabla^i\nabla^l\omega= 0.
	\end{split}
	\end{equation}The last term vanishes because $\nabla^i\nabla^l\omega=\nabla^l\nabla^i\omega$ and $\epsilon_{ijkl}$ is antisymmetric in $i$ and $l$.
\end{proof}The class of initial data which has the above condition is called $(t-\phi^i)$-symmetric. This class was defined in three dimensions by Gibbons \cite{gibbons1972time} and was generalized to higher dimensions in \cite{figueras2011black}. We extend the definition to initial data sets with electromagnetic field.

\begin{definition}An axially symmetric initial data set $(M^n,\bar{g},\bar{k},\bar{E},\bar{B})$ is $(t-\phi^i)$-symmetric if $\phi^i\to -\phi^i$, we have $\bar{g}\to \bar{g}$, $\bar{k}\to -\bar{k}$, $\bar{E}\to \bar{E}$, and $\bar{B}\to (-1)^{n-1}\bar{B}$.
\end{definition} Then we have the following proposition.
\begin{proposition}\label{proptphi}
Suppose $(M^n,g,k,E,B)$ is an axially symmetric, maximal initial data set with $\mu\geq 0$ and $J(\eta_{(l)})=0$. Then there exists a $(t-\phi^i)$-symmetric initial data set $(M^n,\bar{g},\bar{k},\bar{E},\bar{B})$ with same mass, angular momentum, and charges. Moreover, $\bar{\mu}\geq \mu$ and $\bar{J}(\eta_{(l)})=0$.
\end{proposition}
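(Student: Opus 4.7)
The plan is to construct $(\bar g,\bar k,\bar E,\bar B)$ in the global Brill chart $(\rho,z,\phi^l)$ by retaining precisely those tensor components whose parity under $\phi^i\to -\phi^i$ matches the required transformation law, while keeping intact all the data that encode the twist, electric, and magnetic potentials $\zeta,\chi,\psi$. Using \eqref{axialmetric} and the decompositions \eqref{E}, \eqref{extrin}, \eqref{B1}, \eqref{B2}, I would set
\[
\bar g:=e^{-2U+2\alpha}(d\rho^2+dz^2)+e^{-2U}\lambda_{ij}\,d\phi^i d\phi^j,
\]
i.e.\ drop the off-diagonal twist one-forms $A^i_a\,dy^a$; define $\bar k$ by \eqref{extrin} with the non-angular piece $\pi\equiv 0$, keeping only the $\mathcal P$-contribution; set $\bar E:=\mathcal P^{\chi}+\tfrac{n-3}{\sqrt 3}\psi^T\mathfrak J\mathcal P^{\psi}$ (i.e.\ $\hat E=0$); and obtain $\bar B$ from \eqref{B1}/\eqref{B2} by deleting $\hat B$. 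Since $U,\alpha,\lambda_{ij},\psi,\chi,\zeta$ are axially symmetric while $d\phi^i\mapsto -d\phi^i$ under the reflection, $\bar g$ and $\bar E$ are manifestly invariant, $\bar k$ is odd, and $\bar B$ picks up the sign $(-1)^{n-1}$, so $(\bar g,\bar k,\bar E,\bar B)$ is $(t-\phi^i)$-symmetric.

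Next I would verify conservation of asymptotic invariants and that the new data satisfy the required field equations. Because $A^i_a\equiv 0$ for $\bar g$, the two-dimensional distribution orthogonal to $\eta_{(l)}$ is integrable, so Lemma~\ref{Lemma4.3} forces each $\mathcal P^\omega$ to be divergence free on $(M^n,\bar g)$. Rewriting \eqref{EMfield} and the momentum constraint \eqref{momeq} contracted with $\eta_{(l)}$ in terms of divergences of $\mathcal P^\chi,\mathcal P^\psi,\mathcal P^\zeta$ then delivers the Maxwell equations for $(\bar E,\bar B)$ and $\bar J(\eta_{(l)})=0$ automatically. The potentials built from $(\bar g,\bar k,\bar E,\bar B)$ via the procedure of Section~\ref{sec2} coincide with the original $\zeta,\chi,\psi$ by construction, so \eqref{AmCrelation} gives $\bar{\mathcal J}_l=\mathcal J_l$, $\bar Q_e=Q_e$, and $\bar Q_b=Q_b$ at once. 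The ADM mass in the Brill expansion is controlled by the leading-order behavior of $U$ and $\alpha$, while the decay assumptions on $A^i_a$ force its contribution to strictly subleading order, so $\bar m=m$.

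To obtain $\bar\mu\ge\mu$ I would compare the Hamiltonian constraint \eqref{Hcons} term by term. Writing $F^i:=dA^i$, a Kaluza-Klein-type identity for the metric \eqref{axialmetric} expresses $R(g)$ as $R(\bar g)$ plus terms quadratic in $F^i$ of definite sign, so that $R(\bar g)\ge R(g)$ pointwise. Meanwhile, \eqref{extrin} is a pointwise orthogonal decomposition in the adapted frame \eqref{frame1}---the $\mathcal P$-piece lives in the $(e_a,e_{i+2})$ off-diagonal block while $\pi$ is supported on the diagonal blocks---hence $|\bar k|^2_{\bar g}\le |k|^2_g$, and the analogous orthogonalities yield $|\bar E|^2_{\bar g}\le |E|^2_g$ and $|\bar B|^2_{\bar g}\le |B|^2_g$. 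Combined with $\mathrm{tr}_{\bar g}\bar k=0$, substitution into \eqref{Hcons} produces $\bar\mu\ge\mu\ge 0$.

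The hardest step will be arranging the scalar-curvature comparison so that the $F^i$-dependent correction is genuinely non-negative pointwise rather than merely modulo divergence terms: this requires expanding $R(g)$ in the frame \eqref{frame1}, isolating all connection coefficients that involve $F^i_{ab}$, and showing that the cross terms between $F^i$ and derivatives of $\lambda_{ij}$ either reassemble into a non-negative quadratic form in $F^i$ or can be absorbed into the $\pi$-piece of $k$ so as to strengthen $|\bar k|^2_{\bar g}\le |k|^2_g$. A secondary subtlety is confirming that passing from $g$ to $\bar g$ does not alter the potentials $\zeta,\chi,\psi$, which reduces to observing that \eqref{eqpsi}, \eqref{chi}, and the defining equation for $\zeta$ involve $\star$ only on objects lying in the $\phi^i$-invariant sector, where $\star_g$ and $\star_{\bar g}$ produce the same potentials.
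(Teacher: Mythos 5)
Your construction is exactly the one the paper uses: drop the twist one-forms $A^i_a$ to form $\bar g$, set $\pi=\hat E=\hat B=0$ to form $\bar k,\bar E,\bar B$, verify the parity to conclude $(t{-}\phi^i)$-symmetry, invoke Lemma~\ref{Lemma4.3} plus the algebraic identities on $\mathcal P$ to get $\bar J(\eta_{(l)})=0$ and the Maxwell equations, and observe that the potentials and the leading $U,\alpha$ data are unchanged so the asymptotic invariants coincide. The only place where the paper says less than you is the step you flagged as hardest: rather than expanding $R(g)$ in the adapted frame, the paper simply writes down the identity \eqref{bmu},
\begin{equation*}
16\pi\bar\mu=16\pi\mu+|\pi|^2_g+|\hat E|^2_g+|\hat B|^2_g+\tfrac14 e^{-2\alpha}\lambda_{ij}\bigl(\partial_\rho A^i_z-\partial_z A^i_\rho\bigr)\bigl(\partial_\rho A^j_z-\partial_z A^j_\rho\bigr),
\end{equation*}
and asserts it is the Hamiltonian constraint for the new data; this is precisely the combination of $R(\bar g)\ge R(g)$ with the orthogonal-block decompositions you describe, and it shows that your worry about cross terms between $F^i$ and $\partial\lambda$ is unfounded: the curvature gap is already the pure non-negative quadratic form $\tfrac14 e^{-2\alpha}\lambda_{ij}F^i F^j$, with nothing to absorb into $\pi$. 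So the proposal is correct and follows the paper's route; your version is, if anything, more explicit about what must be checked.
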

\begin{proof}Consider the axially symmetric, maximal initial data set $(M^n,g,k,E,B)$. Then related $(t-\phi^i)$ initial data set  $(M^n,\bar{g},\bar{k},\bar{E},\bar{B})$ is obtained as follows. The metric $\bar{g}$ obtained from $g$ by setting $A^i_{a}=0$ in equation \eqref{axialmetric}. Then we have new global frame $\{e_i\}$ equal to the frame \eqref{frame1} without $A^i_a$ terms. Moreover, setting $\pi=\hat{E}=\hat{B}=0$ in equations \eqref{extrin}, \eqref{E}, \eqref{B1}, and \eqref{B2}, we get $\bar{k}$, $\bar{E}$ and $\bar{B}$, respectively. We define the energy density and energy flux as following 
	\begin{equation}\label{bmu}
	16\pi\bar{\mu}=16\pi\mu+|\pi|^2_{g}+|\hat{E}|_{g}+|\hat{B}|_{g}+\frac{1}{4}e^{-2\alpha}\lambda_{ij}\left(\partial_{\rho}A^i_{z}-\partial_{z}A^i_{\rho}\right)\left(\partial_{\rho}A^j_{z}-\partial_{z}A^j_{\rho}\right),
	\end{equation}
	\begin{equation}
	8\pi \bar{J}=\text{div}_{\bar{g}}\bar{k}+\frac{2}{(n-2)^2}\star_{\bar{g}}\left(\bar{B}\wedge \bar{E}\right).
	\end{equation}It follows that $(M^n,\bar{g},\bar{k},\bar{E},\bar{B})$ is a solution of Hamiltonian constraint equation with $\bar{\mu}\geq \mu$. Next we need to show that $\bar{J}(\eta_{(l)})=0$. Since 2-dimensional distribution orthogonal to $\eta_{(l)}$ is integrable, the 1-forms $\mathcal{P}^{\chi}$, $\mathcal{P}^{\psi}$, and $\mathcal{P}^{\zeta}$ defined in Lemma \ref{Lemma4.3} with respect to metric $\bar{g}$ are divergence-free. Using axially symmetry condition, the divergence of $\mathcal{P}$ is
\begin{equation}\label{p1}
\begin{split}
\text{div}_{\bar{g}}\mathcal{P}&=\sum_{i=1}^{2}e_i\left(\psi\right)\left(\mathcal{P}^{\chi}(e_i)+\frac{n-3}{3\sqrt{3}}\psi^T\mathfrak{J}\mathcal{P}^{\psi}(e_i)\right)+\frac{n-3}{3\sqrt{3}}\psi\sum_{i=1}^{2}e_i\left(\psi^T\right)\mathfrak{J}\mathcal{P}^{\psi}(e_i)\\
&\quad+\left(n-4\right)\sum_{i=1}^{2}e_i\left(\chi\right)\mathcal{P}^{\psi}(e_i)\\
&=\sum_{i=1}^{2}e_i\left(\psi\right)\left(\mathcal{P}^{\chi}(e_i)+\frac{n-3}{\sqrt{3}}\psi^T\mathfrak{J}\mathcal{P}^{\psi}(e_i)\right)+\left(n-4\right)\sum_{i=1}^{2}e_i\left(\chi\right)\mathcal{P}^{\psi}(e_i)\\
&=\sum_{i=1}^{2}e_i\left(\psi\right)\left((5-n)\mathcal{P}^{\chi}(e_i)+\frac{n-3}{\sqrt{3}}\psi^T\mathfrak{J}\mathcal{P}^{\psi}(e_i)\right)\\
&=\frac{2}{(n-2)}\sum_{i=1}^{2}e_i\left(\psi\right)\left(\mathcal{P}^{\chi}(e_i)+\frac{n-3}{\sqrt{3}}\psi^T\mathfrak{J}\mathcal{P}^{\psi}(e_i)\right),
\end{split}
\end{equation}where we used $e_i\left(\chi\right)\mathcal{P}^{\psi}(e_i)=-e_i\left(\psi\right)\mathcal{P}^{\chi}(e_i)$. The last equality is true if $n=3,4$. On the the hand, we have
\begin{equation}\label{p2}
\begin{split}
i_{\eta_{(l)}}\star_{\bar{g}}\left(\bar{B}\wedge \bar{E}\right)&=-\left<d\psi^l,\bar{E}\right>_{\bar{g}}\\
&=-\sum_{i=1}^{2}e_i(\psi^l)\left(\mathcal{P}^{\chi}(e_i)+\frac{n-3}{\sqrt{3}}\psi^T\mathfrak{J}\mathcal{P}^{\psi}(e_i)\right).
\end{split}
\end{equation}Combining \eqref{p1} and \eqref{p2} shows that
\begin{equation}\label{divp}
\text{div}_{\bar{g}}\mathcal{P}=-\frac{2}{(n-2)}i_{\eta_{l}}\star_{\bar{g}}\left(B\wedge E\right).
\end{equation}
Observe that $\bar{k}$ has the following decomposition 
\begin{equation}\label{ecd}
\bar{k}(e_i,e_j)=\frac{1}{(n-2)}\left(\eta^T(e_i)\Lambda^{-1}{\mathcal{P}}(e_j)+\eta^T(e_j)\Lambda^{-1}{\mathcal{P}}(e_i)\right),
\end{equation}where $i,j=1,\cdots,4$ and clearly $\text{tr}_{\bar{g}}\bar{k}=0$. 
Then 
\begin{equation}\label{divk}
\begin{split}
\text{div}_{\bar{g}}\left(\bar{k}(\eta_{(l)})\right)=\left(\text{div}_{\bar{g}}\bar{k}\right)(\eta_{(l)})+\frac{1}{2}\left<k,\mathcal{L}_{\eta_{(l)}}\bar{g}\right>=\left(\text{div}_{\bar{g}}\bar{k}\right)(\eta_{(l)}).
\end{split}
\end{equation} Since $\bar{g}\left(\mathcal{P},\eta_{(l)}\right)=0$, we have
\begin{equation}\label{keta}
\bar{k}(\eta_{(l)},\cdot)=\frac{1}{(n-2)}{\mathcal{P}}.
\end{equation}Now from equations \eqref{divp}, \eqref{divk}, and \eqref{keta}, we have 
\begin{equation}
\bar{J}(\eta_{(l)})=\left(\text{div}_{\bar{g}}\bar{k}\right)(\eta_{(l)})+\frac{2}{(n-2)^2}i_{\eta_{(l)}}\star_{\bar{g}}\left(\bar{B}\wedge \bar{E}\right)=0,
\end{equation}for $l=1,n-2$. Next we need to show Maxwell equations for initial data set  $(M^n,\bar{g},\bar{k},\bar{E},\bar{B})$. 
Let $\mathcal{P}^{\chi}$ be the 1-form given in Lemma \ref{Lemma4.3} with respect to metric $\bar{g}$. Then using $\hat{E}=0$ and \eqref{E}, the divergence of $\bar{E}$ is
\begin{equation}\label{e1}
\text{div}_{\bar{g}}\bar{E}=\text{div}_{\bar{g}}\mathcal{P}^{\chi}+\frac{n-3}{\sqrt{3}}\text{div}_{\bar{g}}\left(\psi^T\mathfrak{J}\mathcal{P}^{\psi}\right).
\end{equation}Using definition of $\bar{B}$, equation \eqref{Bbase}, and Lemma \ref{Lemma4.3}, we have
\begin{equation}
\begin{split}
\text{div}_{\bar{g}}(\psi^T\mathfrak{J}\mathcal{P}^{\psi})&=\sum_{i=1}^{2}e_i\left(\psi^T\right)\mathfrak{J}\mathcal{P}^{\psi}(e_i)=\sum_{i=1}^{2}\left(e_i\left(\psi^1\right)\mathcal{P}^{\psi^2}(e_i)-e_i\left(\psi^2\right)\mathcal{P}^{\psi^1}(e_i)\right)\\
&=\star_{\bar{g}}\left(\bar{B}\wedge \bar{B}\right)
\end{split}
\end{equation}In light of this, and the fact that the first term of \eqref{e1} vanishes by Lemma \eqref{Lemma4.3}, it follows that 
\begin{equation}
\text{div}_{\bar{g}}\bar{E}=\frac{n-3}{\sqrt{3}}\star_{\bar{g}}\left(\bar{B}\wedge \bar{B}\right).
\end{equation} Note that when $n=3$, similarly we can show $\text{div}_{\bar{g}}\bar{B}=0$, but for $n=4$, we have magnetic $2$-form in global frame $\{e_i\}$. Then, we have
\begin{equation}
\begin{split}
\star_{\bar{g}} \bar{B}&={\epsilon_{\bar{g}}}_{2(j+2)}{}^{1(i+2)}\bar{B}(e_1,e_{i+2})\theta^2\wedge\theta^{j+2}+{\epsilon_{\bar{g}}}_{1(j+2)}{}^{2(i+2)}\bar{B}(e_2,e_{i+2})\theta^1\wedge\theta^{j+2}\\
&={\epsilon_{\bar{g}}}_{j}{}^{i}\bar{B}(e_1,e_{i+2})\theta^2\wedge\theta^{j+2}-{\epsilon_{\bar{g}}}_{j}{}^{i}\bar{B}(e_2,e_{i+2})\theta^1\wedge\theta^{j+2}\\
&=-{\epsilon_{\delta}}_{j}{}^{i}{\epsilon_{\delta}}^{j}{}_{i}e_2(\psi^j)\theta^2\wedge\theta^{j+2}-{\epsilon_{\delta}}_{j}{}^{i}{\epsilon_{\delta}}^{j}{}_{i}e_1(\psi^j)\theta^1\wedge\theta^{j+2}\\
&=-e_2(\psi^j)\theta^2\wedge\theta^{j+2}-e_1(\psi^j)\theta^1\wedge\theta^{j+2}.
\end{split}
\end{equation}Since $e_1(e_2(\psi))=e_2(e_1(\psi))$, we have $d\star_{\bar{g}}\bar{B}=0$ which is equivalent to $\text{div}_{\bar{g}}\bar{B}=0$.
 
Since charge and twist potentials of axially symmetric and related $(t-\phi^i)$ symmetric data sets are the same, in light of equation \eqref{AmCrelation}, angular momentum and charges do not change. Moreover, a computation in \cite{alaee2015remarks,chrusciel2008mass} shows that the ADM mass is only depends on $U$ and $\alpha$. Since both initial data sets have same $U$ and $\alpha$, they have the same mass. This complete the proof.
\end{proof}
\begin{remark}\label{remarkmean}Assume $(M^n,\mathcal{G},\mathcal{K},\mathcal{E},\mathcal{B})$ is axially symmetric maximal initial data with corners along an axially symmetric hypersurface $\Sigma$ as in Theorem \ref{thm1.2}. In each region $(\bar{\Omega},g_-)$ and $(M^n\,\backslash\Omega,g_+)$ with related Brill coordinate $(\rho,z,\phi^i)$, if the axially symmetric surface is constant $r=r_0$ surface $\Sigma$, then the mean curvatures are
\begin{equation}\label{mean}
H_{\pm}(\Sigma,g_{\pm})=\frac{2(n-2)/r+\partial_r\left(\alpha_{\pm}-2(n-2)U_{\pm}\right)}{\sqrt{e^{-2U_{\pm}+2\alpha_{\pm}}+e^{-2U_{\pm}}\lambda_{ij}A^i_{\pm}A^j_{\pm}}}\bigg\vert_{r=r_0},
\end{equation}where $A_{\pm}^i=r^{n-3}\left({A^i_{\pm\rho}}\sin(n-2)\theta +{A^i_{\pm z}}\cos(n-2)\theta\right)$.
\end{remark}
\begin{remark}\label{rem2}The $(t-\phi^i)$ symmetric initial data set $(M^n,\bar{\mathcal{G}},\bar{\mathcal{K}},\bar{\mathcal{E}},\bar{\mathcal{B}})$ with corners across an axially symmetric hypersurface $\Sigma$ involves functions $\Psi=(U,\lambda_{ij},\zeta^1,\zeta^{n-2},\chi,\psi^1,\psi^{n-2})$ and $\alpha$. Moreover, since it encodes all geometrical and physical properties of related axially symmetric maximal initial data set $(M^n,\mathcal{G},\mathcal{K},\mathcal{E},\mathcal{B})$, in Section \ref{sdeform}, we study smooth deformations of this data set.
\end{remark}
\subsection{Smooth deformation}\label{sdeform} Consider the $(t-\phi^i)$ symmetric initial data set $(M^n,\bar{\mathcal{G}},\bar{\mathcal{K}},\bar{\mathcal{E}},\bar{\mathcal{B}})$ with corners along an axially symmetric hypersurface $\Sigma$. For this initial data set, the 2-dimensional distribution $\mathcal{D}^2$ orthogonal to $\eta_{(l)}$ is integrable globally. Then we consider the orbit space $O=M^n/U(1)^{n-2}$ and we modify the differential structure on $O$ such that the metric on neighborhood of $\Sigma/U(1)^{n-2}$ has Gaussian normal form. The orbit space has two region $O_{-}=\bar{\Omega}/U(1)^{n-2}$ and $O_{+}=\left(M \backslash{\Omega}\right)/U(1)^{n-2}$. The metric of orbit space is 
\begin{equation}
\pi^{\ast}\left(q_{\pm}\right)=\bar{g}_{\pm}-\Lambda^{kl}_{\pm}\eta_{(k)}\eta_{(l)},
\end{equation}where $\pi:M\to O_{\pm}$ is projection onto orbit space. The orbit space is a 2-dimensional smooth manifold with boundary and corners \cite[Proposition 1]{hollands2008uniqueness} which is diffeomorphic to upper-half plane by the Riemann mapping theorem and $\Sigma/U(1)^{n-2}$ is a semi-circle in this upper-half plane. Given $\epsilon>0$, let $V^{2\epsilon}_-$ and $V^{2\epsilon}_+$ be neighborhood of $\Sigma/U(1)^{n-2}$ in $({O}_-,q_-)$ and $(O_+,q_+)$, respectively. Moreover, Let 
\begin{equation}
\Phi_-:(-2\epsilon,0]\times \Sigma/U(1)^{n-2}\to V^{2\epsilon}_-,\qquad \Phi_+:[0,2\epsilon)\times \Sigma/U(1)^{n-2}\to V^{2\epsilon}_+,
\end{equation} be diffeomorphisms such that the pullback metrics are 
\begin{equation}
\Phi_-^*(q_-)=dt^2+q_-{}_{\theta}(t,\theta)d\theta^2,\qquad \Phi_+^*(q_+)=dt^2+q_+{}_{\theta}(t,\theta)d\theta^2.
\end{equation}Note that level sets of the distance function $d_{\pm}:O_{\pm}\to \mathbb{R}$ to $\Sigma$ provide such diffeomorphisms.
Identifying $V=V^{2\epsilon}_-\cup V^{2\epsilon}_+$ with $ (-2\epsilon,2\epsilon)\times \Sigma/U(1)^{n-2}$, we define a differential structure with open covering $\{O_-, O_+,V\}$ on $O$ and denote the new smooth manifold $\tilde{O}$. In particular, metric on $V=(-2\epsilon,2\epsilon)\times \Sigma/U(1)^{n-2}$ takes the form
\begin{equation}
q=dt^2+q_{\theta}(t,\theta)d\theta^2,
\end{equation} where $q=\Phi_-^*(q_-)$ for $t\leq 0$ and $q=\Phi_+^*(q_+)$ for $t\geq 0$. Moreover, the metric $\mathcal{G}$ on related region $(-2\epsilon,2\epsilon)\times \Sigma$ on $\tilde{M}$ is a continuous metric $\bar{g}$ as following
\begin{equation}
\bar{g}=dt^2+\gamma(t,\theta)=dt^2+q_{\theta}(t,\theta)d\theta^2+\Lambda_{ij}(t,\theta)d\phi^id\phi^j,
\end{equation}where $\eta_{(l)}=\frac{\partial}{\partial\phi^l}$. Then $\gamma(t)=g_{ij}(t,\theta)dx^idx^j$, for $(x^1,x^2,x^{n-1})=(\theta,\phi^1,\phi^{n-2})$, is a path of metrics on $\Sigma$. Suppose $\mathcal{S}^i(\Sigma)$ is Banach space of $C^i$ symmetric $(0,2)$ tensors on $\Sigma$ and $\mathcal{M}^i(\Sigma)$ is open and convex  subset of $\mathcal{S}^i(\Sigma)$ consisting of $C^i$ metrics. Then we have the following path of metrics
\begin{equation}
\gamma: (-2\epsilon,2\epsilon)\rightarrow \mathcal{M}^2(\Sigma)\hookrightarrow\mathcal{M}^1(\Sigma)\hookrightarrow\mathcal{M}^0(\Sigma).
\end{equation} By assumption, $\gamma$ is a continuous path in $\mathcal{M}^2(\Sigma)$ and piecewise $C^1$ path in $\mathcal{M}^1(\Sigma)$. Given $0<\delta\ll \epsilon$, we define the deforming path for $s\in(-\epsilon,\epsilon)$
\begin{equation}
\gamma_{\delta}(s)=\int_{\mathbb{R}}\gamma(s-\sigma_{\delta}(s)t)\phi(t) dt=\begin{cases}
\int_{\mathbb{R}}\gamma(t)\left(\frac{1}{\sigma_{\delta}(s)}\phi(\frac{s-t}{\sigma_{\delta}(s)})\right) dt & \sigma_{\delta}(s)>0\\
\gamma(s) & \sigma_{\delta}(s)=0
\end{cases},
\end{equation}where $\phi(s)\in C^{\infty}_{c}([-1,1])$ is standard mollifier on $\mathbb{R}$ with $0\leq \phi(s)\leq 1$ and $\int_{-1}^1\phi ds=1$. Moreover,  $\sigma_{\delta}(t)=\delta^2\sigma(t/\delta)$ for the cut-off function $\sigma(t)\in C^{\infty}_{c}([-\frac{1}{2},\frac{1}{2}])$ with definition
\begin{equation}
\sigma(t)=\begin{cases}
\sigma(t)= \frac{1}{100} & |t|\leq \frac{1}{4}\\
0<\sigma(t)\leq \frac{1}{100} & \frac{1}{4}<|t|<\frac{1}{2}
\end{cases}.
\end{equation}Then $\gamma_{\delta}$ has the following properties \cite{miao2002positive}.
\begin{lemma}\label{lem1}The deform metric $\gamma_{\delta}$ has the following properties 
	\begin{enumerate}[label=(\alph*)]
		\item $\gamma_{\delta}$ is $C^2$ path in $\mathcal{M}^0(\Sigma)$ and a $C^1$ path in $\mathcal{M}^1(\Sigma)$.
		\item $\gamma_{\delta}$ is $C^0$ path in $\mathcal{M}^2(\Sigma)$ which is uniformly close to $\gamma$ and agrees with $\gamma$ outside $(-\frac{\delta}{2},\frac{\delta}{2})$.
		\item $\norm{\gamma_{\delta}(s)-\gamma(s)}_{\mathcal{M}^0(\Sigma)}\leq L\delta^2$ for $s\in(-\epsilon,\epsilon)$.
	\end{enumerate}
\end{lemma}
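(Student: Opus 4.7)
The plan is to partition the parameter interval $(-\epsilon,\epsilon)$ into three regions dictated by the shape of $\sigma_\delta$, establish the required regularity separately on each region, and verify smooth matching at the boundaries. The three regions are: Region~I where $|s|\leq \delta/4$ and $\sigma_\delta(s)=\delta^2/100$ is constant; Region~II where $\delta/4<|s|<\delta/2$ and $\sigma_\delta(s)$ varies smoothly; and Region~III where $|s|\geq \delta/2$ and $\sigma_\delta(s)=0$. Since $\sigma\in C^{\infty}_c([-1/2,1/2])$ is identically $1/100$ on $|t|\leq 1/4$, all derivatives of $\sigma_\delta$ vanish at both $|s|=\delta/4$ and $|s|=\delta/2$, which will render the matching between regions automatic.

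On Region~III the definition gives $\gamma_\delta(s)=\gamma(s)$, which yields the agreement statement in (b) at once and provides $C^2$ regularity away from the corner. On Region~I, the formula collapses, after the change of variables $u=s-\sigma_\delta(s)t$, to the standard fixed-scale convolution $\gamma_\delta(s)=\int \gamma(u)\phi_r(s-u)\,du$ with $r=\delta^2/100$ and $\phi_r(x):=r^{-1}\phi(x/r)$. Consequently $s\mapsto \gamma_\delta(s)$ is $C^\infty$ in $s$ with values in $\mathcal{M}^2(\Sigma)$: every $s$-derivative is absorbed by the smooth kernel, and convolution in $s$ commutes with tangential differentiation in $x$. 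On Region~II, the inequality $\sigma_\delta(s)\leq \delta^2/100 < \delta/4 \leq |s|$ (valid for $\delta$ small) confines the support $[s-\sigma_\delta(s), s+\sigma_\delta(s)]$ of the kernel to one side of $\{s=0\}$; defining $F(s,r):=\int \gamma(u)\phi_r(s-u)\,du$, where only the smooth portion of $\gamma$ is sampled, $F$ is smooth in $(s,r)$ for $r\geq 0$ with $F(s,0)=\gamma(s)$, and then $\gamma_\delta(s)=F(s,\sigma_\delta(s))$ is smooth in $s$ by the chain rule.

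The matching at $|s|=\delta/4$ and $|s|=\delta/2$ is where the plateau structure of $\sigma$ pays off: all derivatives of $\sigma_\delta$ vanish at those points, so any contribution from differentiating through $\sigma_\delta(s)$ drops out to all orders, and the three pieces glue $C^\infty$ in $s$ with values in $\mathcal{M}^2(\Sigma)$. Together with the analysis above, this gives (a). The uniform $C^0$ closeness to $\gamma$ in $\mathcal{M}^2(\Sigma)$ required for (b) follows from the uniform continuity of $s\mapsto \gamma(s)\in \mathcal{M}^2(\Sigma)$, combined with the fact that the mollification kernel is supported in a window of radius $\sigma_\delta(s)\leq \delta^2/100$, which shrinks to $0$ with $\delta$.

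For (c), the piecewise $C^1$ regularity of $\gamma$ into $\mathcal{M}^1(\Sigma)$ (combined with continuity across $s=0$) yields a uniform Lipschitz estimate $\norm{\gamma(s_1)-\gamma(s_2)}_{\mathcal{M}^0(\Sigma)}\leq L'|s_1-s_2|$ on $(-\epsilon,\epsilon)$. Inserting this bound pointwise into the integral defining $\gamma_\delta$ and using $\sigma_\delta(s)\leq \delta^2/100$ gives $\norm{\gamma_\delta(s)-\gamma(s)}_{\mathcal{M}^0(\Sigma)}\leq L'\sigma_\delta(s)\int |t|\phi(t)\,dt\leq L\delta^2$. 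The main obstacle throughout is ensuring $C^2$ regularity in $s$ across the corner $s=0$ where $\partial_s\gamma$ is generally discontinuous; this is precisely what the constant-plateau shape of $\sigma$ resolves, by reducing Region~I to an ordinary fixed-scale convolution in $s$ where no $s$-derivative ever lands on $\gamma$ itself.
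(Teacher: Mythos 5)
Your approach is essentially the one the paper relies on (the paper defers to Miao \cite{miao2002positive} for this lemma): split the parameter interval according to the plateau structure of $\sigma_\delta$, reduce Region~I to a fixed-scale convolution in $s$, handle Region~II via the two-variable function $F(s,r)=\int\gamma(s-rt)\phi(t)\,dt$ composed with $\sigma_\delta$, note that $\gamma_\delta=\gamma$ on Region~III, and use the vanishing of all derivatives of $\sigma_\delta$ at $|s|=\delta/4$ and $|s|=\delta/2$ to make the matching automatic. The estimates for (b) and (c) via uniform continuity and the piecewise Lipschitz bound, with the $\sigma_\delta(s)\le\delta^2/100$ control on the sampling window, are exactly right.

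The one point you should correct is a repeated overstatement of regularity. On Regions~II and~III only the smooth \emph{portion} of the path is sampled, but that portion is not $C^\infty$ in $s$: by hypothesis $\gamma$ is only a $C^0$ path in $\mathcal{M}^2(\Sigma)$, a piecewise $C^1$ path in $\mathcal{M}^1(\Sigma)$, and (implicitly, from $\bar g_\pm$ being $C^2$) a piecewise $C^2$ path in $\mathcal{M}^0(\Sigma)$. Consequently $F(s,r)=\int\gamma(s-rt)\phi(t)\,dt$ is $C^2$ (not smooth) jointly in $(s,r)$ near $r=0$ with values in $C^0(\Sigma)$, and the three pieces glue as a $C^2$ path in $\mathcal{M}^0(\Sigma)$, a $C^1$ path in $\mathcal{M}^1(\Sigma)$, and a $C^0$ path in $\mathcal{M}^2(\Sigma)$ --- which is precisely what (a) and the first part of (b) assert --- not ``$C^\infty$ in $s$ with values in $\mathcal{M}^2(\Sigma)$.'' Replacing ``smooth'' and ``$C^\infty$'' with the correct $C^2$/$C^1$/$C^0$ grades leaves your chain-rule and matching argument intact, and the conclusion follows exactly as you outline.
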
 Now we define the deformation metric
\begin{equation}
\bar{g}_{\delta}=\begin{cases}
dt^2+\gamma_{\delta}(t) & (t,x)\in(-\epsilon,\epsilon)\times\Sigma\\
\bar{g} & (t,x)\notin(-\epsilon,\epsilon)\times\Sigma
\end{cases}.
\end{equation}Before deforming $(\bar{\mathcal{K}},\bar{\mathcal{E}},\bar{\mathcal{B}})$ smoothly on $(-\epsilon,\epsilon)\times \Sigma$, we define the following frame for metric $\bar{g}_{\delta}$  on $(-2\epsilon,2\epsilon)\times \Sigma$
\begin{equation}
e_1=\partial_t,\qquad e_2=\frac{\partial_{\theta}}{\sqrt{\bar{g}_{\delta}(\partial_{\theta},\partial_{\theta})}}\qquad e_{i+2}=\partial_{\phi^i},\qquad \text{for $i=1,n-2$},
\end{equation}with dual frame $\{\theta^i\}$ such that
\begin{equation}\label{defm}
\bar{g}_{\delta}=(\theta^1)^2+(\theta^2)^2+{\Lambda_{\delta}}_{ij}\theta^{i+2}\theta^{j+2}.
\end{equation}Suppose $\omega$ represents the potentials in the initial data set, that is $\omega=\zeta,\chi,$ and $\psi$, then we define 
\begin{equation}\label{deformpo}
\omega_{\delta}(t,x)=\int_{\mathbb{R}}\omega(t-\sigma_{\delta}(t)s)\phi(s) ds.
\end{equation}This implies  $\omega_{\delta}=\omega$ for $(t,x)\notin(-\epsilon,\epsilon)\times\Sigma$. Moreover, since $\omega$ approaches constant on axis $\Gamma$ and $\int_{\mathbb{R}}\phi(t)dt=1$, we have $\omega_{\delta}(t)|_{\Gamma}=\omega(t)|_{\Gamma}$. We define deformations for each components of extrinsic curvature, electric and magnetic $(n-2)$-form in frame $\{e_i\}$. Then using equations \eqref{E}, \eqref{extrin}, \eqref{B1}, \eqref{B2}, \eqref{defm}, and \eqref{deformpo} we obtain the following expressions
\begin{equation}
\begin{split}
&(n-2)\bar{k}_{\delta}(e_1,e_{i+2})(t)\\
&=-\frac{1}{\det\Lambda_{\delta}\sqrt{\bar{g}_{\delta}(\partial_{\theta},\partial_{\theta})}}\left(\partial_{\theta}\zeta^i_{\delta}+\psi_{\delta}^i\left(\partial_{\theta}\chi_{\delta}+\frac{n-3}{3\sqrt{3}}\psi_{\delta}^T\mathfrak{J}d\psi_{\delta}\right)+\left(n-4\right)\chi_{\delta}\partial_{\theta}\psi^i_{\delta}\right)(t),
\end{split}
\end{equation}
\begin{equation}
\begin{split}
&(n-2)\bar{k}_{\delta}(e_2,e_{i+2})(t)\\
&=\frac{1}{\det\Lambda_{\delta}}\left(\partial_{t}\zeta^i_{\delta}+\psi_{\delta}^i\left(\partial_{t}\chi_{\delta}+\frac{n-3}{3\sqrt{3}}\psi_{\delta}^T\mathfrak{J}\partial_t\psi_{\delta}\right)+\left(n-4\right)\chi_{\delta}\partial_{t}\psi^i_{\delta}\right)(t),
\end{split}
\end{equation}for $i=1,2$ and
\begin{equation}
\bar{E}_{\delta}(e_1)(t)=\frac{1}{\sqrt{\bar{g}_{\delta}(\partial_{\theta},\partial_{\theta})\det\Lambda_{\delta}}}\left(\partial_{\theta}\chi_{\delta}+\frac{n-3}{\sqrt{3}}\psi_{\delta}^T\mathfrak{J}\partial_{\theta}\psi_{\delta}\right)(t),
\end{equation}
\begin{equation}
\bar{E}_{\delta}(e_2)(t)=-\frac{1}{\sqrt{\det\Lambda_{\delta}}}\left(\partial_{t}\chi_{\delta}+\frac{n-3}{\sqrt{3}}\psi_{\delta}^T\mathfrak{J}\partial_{t}\psi_{\delta}\right)(t).
\end{equation}For magnetic $(n-2)$-form we have different decomposition in each dimensions. If $n=3$, then we have 
\begin{equation}
\bar{B}_{\delta}(e_1)(t)=\frac{1}{\sqrt{\bar{g}_{\delta}(\partial_{\theta},\partial_{\theta})\det\Lambda_{\delta}}}\partial_{\theta}\psi_{\delta}(t),\qquad \bar{B}_{\delta}(e_2)(t)=-\frac{1}{\sqrt{\det\Lambda_{\delta}}}\partial_{t}\psi_{\delta}(s)(t)
\end{equation} For $n=4$, we have 
\begin{equation}\label{Bbas}
\bar{B}_{\delta}(e_1,e_{i+2})(t)=-\frac{1}{\sqrt{\bar{g}_{\delta}(\partial_{\theta},\partial_{\theta})}}{\epsilon_{\delta}}^j{}_i\partial_{\theta}\psi_{\delta}^j(t),\qquad \bar{B}_{\delta}(e_2,e_{i+2})(t)={\epsilon_{\delta}}^j{}_i\partial_{t}\psi_{\delta}^j(t)
\end{equation} where ${\epsilon_{\delta}}_{ij}=\sqrt{\det\Lambda_{\delta}}\varepsilon_{ij}$ is volume form associated to $\Lambda_{\delta}$ and $\varepsilon_{ij}=0,\pm 1$. 
\begin{lemma}\label{lem2.4} The deform initial data set $(\tilde{M}^n,\bar{g}_{\delta},\bar{k}_{\delta},\bar{E}_{\delta},\bar{B}_{\delta})$ has the following properties.
	\begin{enumerate}[label=(\alph*)]
		\item Each components of $\bar{k}_{\delta}$, $\bar{E}_{\delta}$, and $\bar{B}_{\delta}$ are $C^1$ path in $C^0(\Sigma)$ and $C^0$ path in $C^1(\Sigma)$. Moreover, $(\bar{k}_{\delta},\bar{E}_{\delta},\bar{B}_{\delta})$ agrees with $(\bar{\mathcal{K}},\bar{\mathcal{E}},\bar{\mathcal{B}})$ outside $(-\frac{\delta}{2},\frac{\delta}{2})$.
		\item $\bar{E}_{\delta}$, $\bar{B}_{\delta}$ and $\bar{k}_{\delta}$ are bounded by constants depends on $(\bar{\mathcal{K}},\bar{\mathcal{E}},\bar{\mathcal{B}})$, and $\gamma$ and not $\delta$.
	\end{enumerate}
\end{lemma}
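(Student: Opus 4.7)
The strategy is to transfer the regularity and $\delta$-uniform bounds of the mollified potentials $\omega_\delta$ (for $\omega=\zeta,\chi,\psi$) and of $\gamma_\delta$ (already handled in Lemma \ref{lem1}) to the components of $\bar{k}_\delta,\bar{E}_\delta,\bar{B}_\delta$ via the rational expressions displayed just above the lemma. Each such component is a polynomial in $\omega_\delta$, $\partial_t\omega_\delta$, $\partial_\theta\omega_\delta$, and entries of $\gamma_\delta$, divided by $\sqrt{\bar{g}_\delta(\partial_\theta,\partial_\theta)}$ and a power of $\sqrt{\det\Lambda_\delta}$. So it suffices to show (i) the numerators are $C^1$ in $(t,\theta)$ with uniform $L^\infty$ bounds, and (ii) the denominators are likewise $C^1$, bounded and bounded away from zero off the axis.

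For (i), recall that on each of $\bar{\Omega}$ and $M^n\setminus\bar{\Omega}$ the potentials $\omega$ are $C^2$ and continuous across $\Sigma$, so $\omega$, $\partial_\theta\omega$ and $\partial_t\omega$ are uniformly bounded near $\Sigma$ (with only $\partial_t\omega$ possibly jumping at $t=0$). On the strip $\{|t|<\delta/2\}$ where $\sigma_\delta>0$, the change of variables $u=t-\sigma_\delta(t)s$ recasts \eqref{deformpo} as
\begin{equation*}
\omega_\delta(t,\theta)=\int_{\mathbb{R}}\omega(u,\theta)\,\frac{1}{\sigma_\delta(t)}\phi\!\left(\frac{t-u}{\sigma_\delta(t)}\right)du,
\end{equation*}
whose kernel is $C^\infty_c$ jointly in $(t,u)$. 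This permits any $t$-derivative under the integral, so $\omega_\delta$ is $C^\infty$ in $t$ on the strip; because $\sigma_\delta$ vanishes smoothly at $t=\pm\delta/2$ and $\omega$ is already $C^2$ there, a Taylor expansion in $s$ yields $C^1$-matching with $\omega$ at these boundaries. Since $\partial_\theta$ commutes with the mollification, $\partial_\theta\omega_\delta$ is itself a mollification of the continuous function $\partial_\theta\omega$. Thus $\omega_\delta$ is $C^1$ in $(t,\theta)$ globally, with
\begin{equation*}
\|\omega_\delta\|_\infty\leq\|\omega\|_\infty,\quad \|\partial_\theta\omega_\delta\|_\infty\leq\|\partial_\theta\omega\|_\infty,\quad \|\partial_t\omega_\delta\|_\infty\leq(1+\delta\|\sigma'\|_\infty)\|\partial_t\omega\|_\infty,
\end{equation*}
all independent of $\delta$; the last bound uses $|\sigma_\delta'(t)|=\delta|\sigma'(t/\delta)|\leq\delta\|\sigma'\|_\infty$.

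Combining these with Lemma \ref{lem1} (which gives $\gamma_\delta$ $C^1$ in $(t,\theta)$ and uniformly close to $\gamma$), the numerators of every component are $C^1$ in $(t,\theta)$ and uniformly bounded in $\delta$, while away from the axis the denominators have uniform positive lower bounds inherited from $\gamma$. Hence each component is $C^1$ in $(t,\theta,\phi^i)$, which is exactly a $C^1$ path in $C^0(\Sigma)$ and a $C^0$ path in $C^1(\Sigma)$, proving (a); the agreement with $(\bar{\mathcal{K}},\bar{\mathcal{E}},\bar{\mathcal{B}})$ outside $(-\delta/2,\delta/2)$ is automatic since $\sigma_\delta\equiv 0$ there. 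Part (b) then follows directly from the uniform numerator bounds just established together with the uniform lower bounds on the denominators.

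The chief technical obstacle is the behaviour on the axis $\Gamma$ where $\det\Lambda_\delta$ (and hence the denominators) vanishes like $\rho^2$. I would dispose of this by exploiting that the mollification acts only in the $t$-direction and that $\Gamma$ is stretched along the $t$-axis with $\omega|_\Gamma$ constant; therefore $\omega_\delta|_\Gamma=\omega|_\Gamma$, and the $\rho$-expansions of $\partial_t\omega_\delta,\partial_\theta\omega_\delta$ near $\Gamma$ inherit the same vanishing orders as those of $\omega$, so the ratios in the expressions for $\bar{k}_\delta,\bar{E}_\delta,\bar{B}_\delta$ remain bounded uniformly in $\delta$.
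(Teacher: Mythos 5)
Your proposal is correct and follows essentially the same approach as the paper: read the components of $\bar{k}_\delta,\bar{E}_\delta,\bar{B}_\delta$ off the explicit formulas in terms of the mollified potentials $\omega_\delta$ and the mollified metric $\gamma_\delta$, then invoke the mollification formula $\partial_t\omega_\delta(t,\theta)=\int_{\mathbb{R}}\omega'\left(t-\sigma_\delta(t)s\right)\left(1-s\,\delta\,\sigma'(t/\delta)\right)\phi(s)\,ds$ together with Lemma~\ref{lem1} to obtain the $\delta$-uniform $L^\infty$ bounds. The only organizational difference is that the paper establishes the regularity in part (a) by overlapping two regimes, namely $|t|<\delta/4$ where $\sigma_\delta\equiv\delta^2/100$ gives a standard convolution and $|t|>\delta^2/100$ where the mollification range avoids $\Sigma$ so the original (piecewise) smoothness applies, whereas you obtain interior smoothness on the whole strip $\{|t|<\delta/2\}$ in a single step via the change of variables $u=t-\sigma_\delta(t)s$ and then verify $C^1$-matching at $t=\pm\delta/2$ using $\sigma_\delta(\pm\delta/2)=\sigma_\delta'(\pm\delta/2)=0$; both routes are valid, and yours is arguably a cleaner unification. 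You also explicitly flag the degeneracy of the denominators $\sqrt{\det\Lambda_\delta}$ on the axis $\Gamma$, which the paper's proof of this lemma passes over silently; your observation that the $t$-directional mollification preserves the constancy of $\omega$ on $\Gamma$ (and hence the vanishing orders of the numerators) is the correct way to close that point, though it would benefit from a line showing that $\partial_\theta\omega_\delta$ and $\partial_t\omega_\delta$ inherit the same $\rho$-expansion orders, which follows since the mollification commutes with $\partial_\theta$ and acts only in $t$.
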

\begin{proof}
	(a) Using definition of initial data set follows that the components $\bar{k}_{ij},\bar{E}_{i},\bar{B}_{ij}:(-\epsilon,\epsilon)\to C^{1-l}(\Sigma)$ are $C^{l}$ away from $\Sigma$ for $l=0,1$. Then, the components of deform initial data sets $\bar{k}_{\delta},\bar{E}_{\delta},\bar{B}_{\delta}:(-\epsilon,\epsilon)\to C^{1-l}(\Sigma)$ are $C^l$ away from $[-\frac{\delta^2}{100},\frac{\delta^2}{100}]$. For $t\in(-\frac{\delta}{4},\frac{\delta}{4})$, we have $\sigma_{\delta}(t)=\frac{\delta^2}{100}$ and we get standard mollification of potentials and metric which are smooth. Moreover, for $|t|>\frac{\delta}{2}$, we have $\sigma_{\delta}(t)=0$, $\gamma_{\delta}=\gamma$, $\psi_{\delta}=\psi$, $\chi_{\delta}=\chi$, and $\zeta_{\delta}=\zeta$. Therefore, $(\bar{k}_{\delta},\bar{E}_{\delta},\bar{B}_{\delta})$ agrees with $(\bar{\mathcal{K}},\bar{\mathcal{E}},\bar{\mathcal{B}})$.\\
\noindent(b) By Lemma \ref{lem1} and definition of $\bar{E}_{\delta}$, $\bar{B}_{\delta}$ and $\bar{k}_{\delta}$, we only need to show that derivative of deform potentials are bounded by constants depend on $(\bar{\mathcal{K}},\bar{\mathcal{E}},\bar{\mathcal{B}})$ and $\gamma$. Observe that
\begin{equation}
\partial_t\omega_{\delta}(t,\theta)=\int_{\mathbb{R}}\omega'\left(t-\sigma_{\delta}(t)s\right)\left(1-s\delta\sigma'(\frac{t}{\delta})\right)\phi(s) ds
\end{equation} for $\omega_{\delta}=\zeta_{\delta},\chi_{\delta},$ and $\psi_{\delta}$. Clearly this is bounded by constants depend on $(\bar{\mathcal{K}},\bar{\mathcal{E}},\bar{\mathcal{B}})$. 
\end{proof}
Next, we show behavior of energy density, energy flux, and Maxwell equations of deform initial data set and we prove the following generalization of \cite[Proposition 3.1]{miao2002positive} to axially symmetric initial data set.
\begin{prop}\label{propmain}
Let $(\tilde{M}^n,\bar{\mathcal{G}},\bar{\mathcal{K}},\bar{\mathcal{E}},\bar{\mathcal{B}})$ be a n-dimensional $(t-\phi^i)$-symmetric, simply connected initial data set admitting
corners across axially symmetric hypersurface $\Sigma$. Then there exists a family of 4-tuple $(\bar{g}_{\delta},\bar{k}_{\delta},\bar{E}_{\delta},\bar{B}_{\delta})$, where $\bar{g}_{\delta}$ is $C^2$ and $\bar{k}_{\delta},\bar{E}_{\delta},\bar{B}_{\delta}$ are $C^1$, for ${0<\delta\leq \delta_0}$ on $\tilde{M}^n$ so that $\bar{g}$ is uniformly close to $\bar{g}$ on $\tilde{M}^n$. Moreover, $(\bar{g}_{\delta},\bar{k}_{\delta},\bar{E}_{\delta},\bar{B}_{\delta})=(\bar{\mathcal{G}},\bar{\mathcal{K}},\bar{\mathcal{E}},\bar{\mathcal{B}})$ outside $\mathcal{O}_{\delta}=(-\frac{\delta}{2},\frac{\delta}{2})\times\Sigma$ and the energy density satisfies 
	\begin{equation}\label{mu1}
	\bar{\mu}_{\delta}(t,x)=O(1),\quad \text{for}\quad  (t,x)\in \left\{-\frac{\delta^2}{100}<|t|\leq \frac{\delta}{2}\right\}\times \Sigma
	\end{equation} and for $(t,x)\in [-\frac{\delta^2}{100},\frac{\delta^2}{100}]\times \Sigma$, we have
	\begin{equation}\label{mu2}
	\bar{\mu}_{\delta}(t,x)=O(1)+\left\{H(\Sigma,\bar{g}_-)(x)-H(\Sigma,\bar{g}_+)(x)\right\}\left\{\frac{100}{\delta^2}\phi(\frac{100t}{\delta^2})\right\},
	\end{equation}
	where $O(1)$ represents quantities that are bounded by constants depending only on $(\bar{\mathcal{G}},\bar{\mathcal{K}},\bar{\mathcal{E}},\bar{\mathcal{B}})$, but not on $\delta$. Moreover, for all $t\in \mathbb{R}$, we have $\text{tr}_{\bar{g}_{\delta}}\bar{k}_{\delta}(t)$, $\bar{J}_{\delta}(\eta_{(l)})(t)=0$, and
	\begin{equation}\label{fielddelta}
	 \text{div}_{\bar{g}_{\delta}}\bar{B}_{\delta}(t)=0,\qquad \text{div}_{\bar{g}_{\delta}}\bar{E}_{\delta}(t)=\frac{n-3}{\sqrt{3}}\star_{\bar{g}_{\delta}}\left(\bar{B}_{\delta}\wedge \bar{B}_{\delta}\right)(t),
	\end{equation} Furthermore, the angular momentum and charges of $(\tilde{M}^n,\bar{g}_{\delta},\bar{k}_{\delta},\bar{E}_{\delta},\bar{B}_{\delta})$ are conserved and equal to angular momentum and charges of $(\tilde{M}^n,\bar{\mathcal{G}},\bar{\mathcal{K}},\bar{\mathcal{E}},\bar{\mathcal{B}})$.
\end{prop}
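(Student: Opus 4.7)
The plan is to dispatch the regularity and algebraic identities quickly using Lemmas \ref{lem1}, \ref{lem2.4}, \ref{Lemma4.3} and the computations of Proposition \ref{proptphi}, and then to reduce the pointwise energy-density estimate \eqref{mu1}--\eqref{mu2} to a Miao-type scalar-curvature calculation in Gaussian normal form.

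By Lemma \ref{lem1}, $\gamma_\delta$ is a $C^2$ path in $\mathcal{M}^0(\Sigma)$, so $\bar g_\delta=dt^2+\gamma_\delta(t)$ is $C^2$ on $\tilde M^n$; since $\bar k_\delta$, $\bar E_\delta$, $\bar B_\delta$ are built as algebraic combinations of $\gamma_\delta$, $\gamma_\delta^{-1}$, and first $(t,\theta)$-derivatives of $\zeta_\delta,\chi_\delta,\psi_\delta$, Lemma \ref{lem2.4}(a) gives them $C^1$ regularity. Agreement with $(\bar{\mathcal{G}},\bar{\mathcal{K}},\bar{\mathcal{E}},\bar{\mathcal{B}})$ outside $\mathcal O_\delta$ is immediate from $\sigma_\delta(t)=0$ for $|t|\ge \delta/2$, which trivializes both \eqref{deformpo} and the metric mollification. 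Maximality $\text{tr}_{\bar g_\delta}\bar k_\delta=0$ follows from the decomposition \eqref{ecd}, since $\bar k_\delta$ has no diagonal $(e_i,e_j)$ or $(e_{i+2},e_{j+2})$ contribution.

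For the Maxwell system \eqref{fielddelta} and $\bar J_\delta(\eta_{(l)})=0$, the key observation is that the product structure of $\bar g_\delta$ on $V$ preserves both the $U(1)^{n-2}$ action and the integrability of the 2-plane distribution orthogonal to $\eta_{(l)}$; Lemma \ref{Lemma4.3} therefore still applies, making $\mathcal P^{\chi_\delta}, \mathcal P^{\psi_\delta}, \mathcal P^{\zeta_\delta}$ divergence-free with respect to $\bar g_\delta$. Because $\bar E_\delta,\bar B_\delta$ are defined by \eqref{E}, \eqref{B1}, \eqref{B2} with potentials replaced by mollified ones, the computations \eqref{p1}--\eqref{keta} of Proposition \ref{proptphi} reproduce \eqref{fielddelta} and $\bar J_\delta(\eta_{(l)})=0$ verbatim. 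Conservation of $\mathcal{J}_l, Q_e, Q_b$ reduces by \eqref{AmCrelation} to the equality $\omega_\delta|_{\Gamma_\pm}=\omega|_{\Gamma_\pm}$ for $\omega\in\{\zeta,\chi,\psi\}$; this holds because each such potential is constant in $t$ on the axis $\Gamma$, so \eqref{deformpo} together with $\int_{\mathbb R}\phi=1$ leaves it unchanged.

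The principal obstacle is the pointwise estimate of $\bar\mu_\delta$. Inserting the deformed data into the Hamiltonian constraint \eqref{Hcons} gives
\begin{equation*}
16\pi\bar\mu_\delta=R(\bar g_\delta)-|\bar k_\delta|^2_{\bar g_\delta}-\tfrac{2}{(n-2)^2}|\bar E_\delta|^2_{\bar g_\delta}-\tfrac{2}{(n-2)^3}|\bar B_\delta|^2_{\bar g_\delta},
\end{equation*}
and Lemma \ref{lem2.4}(b) bounds the last three terms by $O(1)$ uniformly in $\delta$. On $\mathcal O_\delta$, $\bar g_\delta=dt^2+\gamma_\delta(t)$ is in Gaussian normal form, so with $A_\delta=\tfrac12\partial_t\gamma_\delta$ and $H_\delta=\text{tr}_{\gamma_\delta}A_\delta$ one has
\begin{equation*}
R(\bar g_\delta)=R(\gamma_\delta(t))-|A_\delta(t)|^2_{\gamma_\delta}-H_\delta(t)^2-2\partial_t H_\delta(t).
\end{equation*}
For $\delta^2/100<|t|\le \delta/2$ the parameter $\sigma_\delta(t)$ stays positive and varies only mildly, so the mollification is at a well-defined scale and all four terms are uniformly $O(1)$, giving \eqref{mu1}. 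For $|t|\le \delta^2/100$ we have $\sigma_\delta\equiv \delta^2/100$, so $\gamma_\delta(t)$ is a standard mollification of $\gamma$ at scale $\delta^2/100$; the first three terms remain $O(1)$, while $-2\partial_t H_\delta$ concentrates the jump of $\partial_t\gamma$ across $\Sigma$. Using that the one-sided limits of $\tfrac12\text{tr}_\gamma(\partial_t\gamma)$ at $t=0^\mp$ equal $H(\Sigma,\bar g_\mp)$, a computation following Miao \cite{miao2002positive} gives
\begin{equation*}
-2\partial_t H_\delta(t,x)=\bigl(H(\Sigma,\bar g_-)(x)-H(\Sigma,\bar g_+)(x)\bigr)\tfrac{100}{\delta^2}\phi\!\left(\tfrac{100t}{\delta^2}\right)+O(1),
\end{equation*}
which yields \eqref{mu2}. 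The delicate point is controlling the nonlinear dependence on $\gamma_\delta^{-1}$; here Lemma \ref{lem1}(c)'s estimate $\norm{\gamma_\delta-\gamma}_{\mathcal{M}^0(\Sigma)}\le L\delta^2$ ensures $\gamma_\delta$ is uniformly invertible, so all the $O(1)$ claims go through.
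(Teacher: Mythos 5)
Your argument matches the paper's proof of Proposition \ref{propmain}: you invoke Lemmas \ref{lem1} and \ref{lem2.4} for the regularity and the agreement outside $\mathcal{O}_\delta$, the $(t-\phi^i)$ structure together with Lemma \ref{Lemma4.3} and Proposition \ref{proptphi} for maximality, $\bar J_\delta(\eta_{(l)})=0$, and the Maxwell system \eqref{fielddelta}, and the Gaussian-normal-form Gauss equation with the Miao-type concentration of the $-2\partial_t H_\delta$ term for \eqref{mu1}--\eqref{mu2}. The only cosmetic difference is that you unfold the scalar-curvature computation and the conservation argument explicitly, whereas the paper delegates these respectively to \cite[Proposition 3.1]{miao2002positive} and to \cite{dain2013lower,alaee2017relating}.
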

\begin{proof}[Proof of Proposition \ref{propmain}] For $t\notin(-\frac{\delta}{2},\frac{\delta}{2})$ or outside $\mathcal{O}_{\delta}$, the initial data $(\bar{g}_{\delta},\bar{k}_{\delta},\bar{E}_{\delta},\bar{B}_{\delta})$ is same as $(\tilde{M}^n,\bar{\mathcal{G}},\bar{\mathcal{K}},\bar{\mathcal{E}},\bar{\mathcal{B}})$, thus the result hold. Let $t\in(-\epsilon,\epsilon)$. First, we show energy density $\bar{\mu}_{\delta}$ satisfies in equations \eqref{mu1} and \eqref{mu2}. The Gauss equation and evolution of mean curvature of $\Sigma$ leads to
	\begin{equation}\label{gauss}
	R_{\bar{g}_{\delta}}=\text{scal}_{\delta}-\left(H_{\delta}^2+|A_{\delta}|^2\right)-2D_{\nu}H_{\delta}
	\end{equation}where $\nu$ is unit normal vector field on $\Sigma$, $A_{\delta}$  and $H_{\delta}$ are the second fundamental form and mean curvature on $\Sigma$, and $\text{scal}_{\delta}$ is scalar curvature of hypersurface $\Sigma$. Then, substitution equation \eqref{gauss} in the Hamiltonian constraint equation \eqref{Hcons}, we obtain
	\begin{equation}
	16\pi \bar{\mu}_{\delta}=\text{scal}_{\delta}-\left(H_{\delta}^2+|A_{\delta}|^2\right)-2D_{\nu}H_{\delta}-|\bar{k}_{\delta}|_{\bar{g}_{\delta}}^2-\frac{2}{(n-2)^2}|\bar{E}_{\delta}|_{\bar{g}_{\delta}}^2-\frac{2}{(n-2)^3}|\bar{B}_{\delta}|_{\bar{g}_{\delta}}^2.
	\end{equation}The Lemma \ref{lem2.4} implies  $\bar{k}_{\delta}$, $\bar{B}_{\delta}$, and $\bar{E}_{\delta}$ are bounded by constants depends on $\bar{\mathcal{K}}$, $\bar{\mathcal{E}}$, $\bar{\mathcal{B}}$, and $\gamma$. Then, in light of \cite[Proposition 3.1]{miao2002positive}, we have \eqref{mu1} and \eqref{mu2}. Based on structure of $(t-\phi^i)$ symmetric data and Proposition \ref{proptphi}, the deform initial data set is maximal, $\bar{J}_{\delta}(\eta_{(l)})=0$, and is a solution of field equations in \eqref{fielddelta}. Finally, conservation of angular momentum and charges for any homologous surface to $S^{n-1}_{\infty}$ follows from field equations \eqref{fielddelta} and  \cite[Lemma 2.1]{dain2013lower} for 3-dimensional case and \cite[Section 4]{alaee2017relating} for 4-dimensional case.
	\end{proof}
\section{Proof of Main Results}\label{sec3}
In order to prove the main theorem, we need to show that under appropriate conformal transformation the energy density of the deform initial data set is non-negative and conservation of angular momentum and charges hold. Let $C$, $C_1$, $C_2$, and $C_3$ be constants depend on initial data set $(\bar{\mathcal{G}},\bar{\mathcal{K}},\bar{\mathcal{E}},\bar{\mathcal{B}})$. Consider the following jump on the mean curvatures of an axially symmetric hypersurface $\Sigma$ within $(t-\phi^i)$-symmetric data set 
\begin{equation}\label{meancurv}
	H_-(\Sigma,\bar{g}_-)\geq H_+(\Sigma,\bar{g}_+).
\end{equation}By Remark \ref{remarkmean} and definition of $(t-\phi^i)$-symmetric data set,  this is equivalent to $H_-(\Sigma,{g}_-)\geq H_+(\Sigma,{g}_+)$ for related axially symmetric data set. Assume the initial data set has one designated asymptotically flat $\infty$ and the other either asymptotically cylindrical $\infty_{1}$ or boundary $\partial\tilde{M}^n$. Then we have the following proposition. 
\begin{proposition}\label{Prop3.1}
	Let $\bar{g}_{\delta}$ be a $C^2$ $(t-\phi^i)$-symmetric asymptotically flat metric with one designated asymptotically flat $\infty$ and the other either asymptotically cylindrical $\infty_1$ or boundary $\partial\tilde{M}^n$, and $c_n=\frac{n-2}{4(n-1)}$, then \begin{equation}\label{PDEu}
	\begin{cases}
	\Delta_{\bar{g}_{\delta}}u_{\delta}+c_n\bar{\mu}_{\delta-}u_{\delta}=0 & \text{on}\quad \tilde{M}^n\\
	u_{\delta}=1 & \text{on}\quad \infty\\
	u_{\delta}=1& \text{on}\quad \infty_1
	\end{cases}\quad \text{and}\quad
	\begin{cases}
	\Delta_{\bar{g}_{\delta}}u_{\delta}+c_n\bar{\mu}_{\delta-}u_{\delta}=0 & \text{on}\quad \tilde{M}^n\\
	u_{\delta}=1 & \text{on}\quad \infty\\
	\frac{\partial u_{\delta}}{\partial n}=0& \text{on}\quad \partial\tilde{M}^n
	\end{cases}
	\end{equation}have a unique axially symmetric $C^2$ solution $u_{\delta}\geq 1$ on $\tilde{M}^n$ so that, $\lim_{\delta\to 0}||u_{\delta}-1||_{L^{\infty}(\tilde{M}^n)}=0$, $u_{\delta}=1+\frac{A_{\delta}}{|x|^{n-2}}+O_1(|x|^{1-n})$ for some constant $A_{\delta}$ as $|x|\to\infty$, and $\frac{\partial u_{\delta}}{\partial n}=0$ at $\infty_1$.
\end{proposition}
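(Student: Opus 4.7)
\smallskip
\noindent\textbf{Proof proposal.} The plan is to implement the standard conformal-correction scheme going back to Miao \cite{miao2002positive}, but now in our axially symmetric setting with two possibly distinct end/boundary behaviors. The essential point is that under the corner inequality \eqref{meancurv}, the delta-like contribution in \eqref{mu2} is non-negative, so the \emph{negative} part $\bar{\mu}_{\delta-}=\max(-\bar{\mu}_\delta,0)$ picks up only $O(1)$ terms supported in $\mathcal{O}_\delta=(-\delta/2,\delta/2)\times\Sigma$. Since $\bar{\mu}_\delta=\bar{\mu}\geq \mu\geq 0$ outside this strip (by the $(t-\phi^i)$-symmetric construction and Proposition \ref{proptphi}), I first observe that $\bar{\mu}_{\delta-}$ is supported in $\mathcal{O}_\delta$ with $\|\bar{\mu}_{\delta-}\|_{L^\infty}\leq C$ independent of $\delta$, hence
\begin{equation*}
\|\bar{\mu}_{\delta-}\|_{L^{n/2}(\tilde{M}^n)}\leq C\,\mathrm{Vol}(\mathcal{O}_\delta)^{2/n}\leq C\,\delta^{2/n}\longrightarrow 0\quad\text{as }\delta\to 0.
\end{equation*}

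Next, I would solve \eqref{PDEu} by exhaustion. Let $\{\Omega_R\}$ be an increasing family of axially symmetric, relatively compact open sets with $\bigcup_R\Omega_R=\tilde{M}^n$ (truncated at both ends, or at $\infty$ and $\partial\tilde{M}^n$ in the Neumann case), and consider the linear problem
\begin{equation*}
\Delta_{\bar{g}_\delta}u_\delta^{R}+c_n\bar{\mu}_{\delta-}u_\delta^{R}=0\quad\text{in }\Omega_R,
\end{equation*}
with $u_\delta^{R}=1$ on the asymptotically flat portion of $\partial\Omega_R$ and either $u_\delta^{R}=1$ on the cylindrical portion or $\partial_\nu u_\delta^{R}=0$ on $\partial\tilde{M}^n$. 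The operator $\mathcal{L}_\delta:=-\Delta_{\bar{g}_\delta}-c_n\bar{\mu}_{\delta-}$ is linear, and for $\delta$ small its kernel on $H^1_0$ is trivial: testing $\mathcal{L}_\delta v=0$ against $v$ and using Sobolev embedding,
\begin{equation*}
\int|\nabla v|^2\,dV_{\bar{g}_\delta}=c_n\int\bar{\mu}_{\delta-}v^2\,dV_{\bar{g}_\delta}\leq c_n\,C_{\mathrm{Sob}}\,\|\bar{\mu}_{\delta-}\|_{L^{n/2}}\int|\nabla v|^2\,dV_{\bar{g}_\delta},
\end{equation*}
which forces $v\equiv 0$ once $\delta_0$ is chosen small. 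By the Fredholm alternative, $u_\delta^{R}$ exists and is unique; writing $v_\delta^{R}=u_\delta^{R}-1$ and using that the constant $1$ is a subsolution ($\mathcal{L}_\delta(1)=-c_n\bar{\mu}_{\delta-}\leq 0$), the comparison principle (Dirichlet) or Hopf lemma (Neumann) gives $u_\delta^{R}\geq 1$, and the uniform estimate $\|u_\delta^{R}-1\|_{L^\infty}\leq C\|\bar{\mu}_{\delta-}\|_{L^{n/2}}$ follows from the same test-function argument applied to $v_\delta^{R}$ plus Moser iteration to upgrade to $L^\infty$.

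Finally, I would send $R\to\infty$: interior $C^{2,\alpha}$ Schauder estimates (applicable since $\bar{g}_\delta\in C^2$ and $\bar{\mu}_{\delta-}$ is $L^\infty$) together with the uniform $L^\infty$ bound give a subsequential $C^2_{\mathrm{loc}}$ limit $u_\delta$ solving \eqref{PDEu} with $u_\delta\geq 1$; uniqueness on $\tilde{M}^n$ follows from the same kernel-triviality argument applied globally (using decay at infinity and either decay or Neumann condition at the second end). Axial symmetry of $u_\delta$ is immediate from uniqueness since both $\bar{g}_\delta$ and $\bar{\mu}_{\delta-}$ are $U(1)^{n-2}$-invariant. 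The asymptotic expansion $u_\delta=1+A_\delta|x|^{2-n}+O_1(|x|^{1-n})$ is obtained by writing $u_\delta-1$ as a Newtonian potential of $c_n\bar{\mu}_{\delta-}u_\delta$, which is compactly supported in $\mathcal{O}_\delta$, expanding the Green kernel, and applying elliptic estimates on annuli; in the Neumann case the vanishing of $\partial_\nu u_\delta$ at $\infty_1$ is inherited from the exhausting solutions. The main obstacle is the smallness condition on $\|\bar{\mu}_{\delta-}\|_{L^{n/2}}$: it is what makes the Fredholm alternative applicable, what drives $\|u_\delta-1\|_{L^\infty}\to 0$ as $\delta\to 0$, and it relies crucially on the one-sided mean curvature hypothesis \eqref{meancurv} to keep the concentrated part of $\bar{\mu}_\delta$ in \eqref{mu2} non-negative; without this the negative part would blow up like $\delta^{-2}$ on a set of volume $\delta$, losing control in any $L^p$ with $p\geq 1$.
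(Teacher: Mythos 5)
Your argument follows the same Miao-type conformal correction scheme that the paper uses, and the key structural points agree: the one-sided mean curvature hypothesis forces the concentrated delta-like term of $\bar{\mu}_\delta$ to be non-negative so that $\bar{\mu}_{\delta-}$ is an $O(1)$ quantity supported in $\mathcal{O}_\delta$, hence small in $L^{n/2}$, which in turn gives solvability, uniqueness, and $\|u_\delta-1\|_{L^\infty}\to 0$. The difference is primarily in packaging: the paper simply cites \cite[Lemma 3.3]{schoen1979proof} for existence, positivity, and the asymptotic expansion $1+A_\delta|x|^{2-n}+O_1(|x|^{1-n})$, and \cite[Proposition 4.1]{miao2002positive} for the $L^\infty$ convergence, whereas you construct the solution by exhaustion plus the Fredholm alternative. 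That is more self-contained and perfectly reasonable, though it duplicates classical work; neither route buys a genuinely stronger conclusion.

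Two points in your write-up need tightening. First, the step ``the comparison principle (Dirichlet) or Hopf lemma (Neumann) gives $u_\delta^R\geq 1$'' is not immediate: the zeroth-order coefficient of $\mathcal{L}_\delta=-\Delta_{\bar{g}_\delta}-c_n\bar{\mu}_{\delta-}$ is $-c_n\bar{\mu}_{\delta-}\leq 0$, the \emph{wrong} sign for the standard maximum principle. You should first use the $L^\infty$ smallness (or Schoen--Yau's positivity statement) to conclude $u_\delta>0$; only then is $u_\delta$ genuinely superharmonic, $\Delta_{\bar{g}_\delta}u_\delta=-c_n\bar{\mu}_{\delta-}u_\delta\leq 0$, and the strong minimum principle (together with the boundary/end data and, for the Neumann problem, the Hopf lemma) delivers $u_\delta\geq 1$. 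This is exactly how the paper argues once positivity is in hand. Second, you conflate the two boundary-value problems: the Neumann condition $\partial_\nu u_\delta=0$ is imposed on the physical boundary $\partial\tilde{M}^n$ in the second problem, while the statement $\frac{\partial u_\delta}{\partial n}=0$ \emph{at $\infty_1$} is a conclusion in the first (Dirichlet, cylindrical-end) problem, obtained because the first Laplace eigenvalue on the cylindrical end is zero and $\bar{\mu}_{\delta-}$ vanishes there, which forces exponential decay of $u_\delta-1$ toward $\infty_1$. Your phrase ``in the Neumann case the vanishing of $\partial_\nu u_\delta$ at $\infty_1$ is inherited from the exhausting solutions'' mixes these two situations and should be reworded along the above lines.
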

\begin{proof}By equations \eqref{mu1} and \eqref{mu2} for $\bar{\mu}_{\delta-}$ and jump in the mean curvature in \eqref{meancurv}, we have
	\begin{equation}\label{eq3}
	\begin{cases}
	\bar{\mu}_{\delta-}=0&\text{outside $\mathcal{O}_{\delta}$}\\
	|\bar{\mu}_{\delta-}|\leq C_1&\text{inside $\mathcal{O}_{\delta}$}.\\
	\end{cases}
	\end{equation}Since outside a compact set $\bar{\mu}_{\delta-}=0$ and the first eigenvalue of Laplacian is zero on asymptotically cylindrical end $\infty_1$, the solution of \eqref{PDEu} can have asymptotically constant behavior at asymptotically cylindrical end $\infty_1$. Moreover, asymptotically cylindrical end yields to exponential decay for solutions and we obtain $\frac{\partial u_{\delta}}{\partial n}=0$ at $\infty_1$. Then using \cite[Lemma 3.3]{schoen1979proof}, we get existence of a unique $C^2$ positive solution with asymptotic $u_{\delta}=1+\frac{A_{\delta}}{|x|^{n-2}}+O(|x|^{1-n})$ at $\infty$ such that 
	\begin{equation}
	A_{\delta}=\frac{1}{\omega_n}\lim_{\delta\to 0}\int_{\tilde{M}}\left(-|\nabla_{\bar{g}_{\delta}}u_{\delta}|^2+c_n\bar{\mu}_{\delta-}u_{\delta}^2\right) dx_{\bar{g}_{\delta}},
	\end{equation}where $dx_{\bar{g}_{\delta}}$ is volume form with respect to $\bar{g}_{\delta}$ and $\omega_n$ is volume of unit $(n-1)$-dimensional unit sphere. Since $u_{\delta}$ is superharmonic, by strong maximum principle the minimum is at ends or boundary. For Dirichlet problem in \eqref{PDEu}, it is clear that $u_{\delta}\geq 1$. For other problem, if the minimum is at asymptotically flat end $\infty$, then $u_{\delta}\geq 1$. But if the minimum is at $\partial\tilde{M}^n$ and less than one, then using Hopf maximum principle we should have $\nu(u_{\delta})<0$, which is a contradiction. Therefore, $u_{\delta}\geq 1$. Finally, using \cite[Proposition 4.1]{miao2002positive}, we have $\lim_{\delta\to 0}||u_{\delta}-1||_{L^{\infty}(\tilde{M}^n)}=0$.
\end{proof}
Next we define the following conformal transformation
\begin{equation}\label{conf}
\tilde{g}_{\delta}=u^{\frac{4}{n-2}}_{\delta}\bar{g}_{\delta},\qquad \tilde{k}_{\delta}=u^{-2}_{\delta}\bar{k}_{\delta},\qquad \tilde{E}_{\delta}=u^{-2}_{\delta}\bar{E}_{\delta}\qquad \tilde{B}_{\delta}=u^{-2}_{\delta}\bar{B}_{\delta}
\end{equation}where $\bar{g}_{\delta}$ and $\bar{k}_{\delta}$ are $(0,2)$-tensor, $\bar{E}_{\delta}$ is a 1-form, and $\bar{B}_{\delta}$ is a $(n-2)$-form. Then we have 
\begin{proposition}\label{prop3.2}Consider the conformal initial data set $(\tilde{g}_{\delta},\tilde{k}_{\delta},\tilde{E}_{\delta},\tilde{B}_{\delta})$ in equation \eqref{conf}. Then the metric is $C^2$, $\tilde{k}_{\delta},\tilde{E}_{\delta}$, and $\tilde{B}_{\delta}$ are $C^1$, and it has non-negative energy density and vanishing energy flux in the direction of symmetries. Furthermore, its mass converges to the mass of $(\bar{\mathcal{G}},\bar{\mathcal{K}},\bar{\mathcal{E}},\bar{\mathcal{B}})$ and angular momentum and charges are the same as angular momentum and charges of $(\bar{\mathcal{G}},\bar{\mathcal{K}},\bar{\mathcal{E}},\bar{\mathcal{B}})$. 
\end{proposition}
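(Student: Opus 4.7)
The plan is to verify the four claims of Proposition \ref{prop3.2} by combining the conformal transformation rules with the PDE satisfied by $u_\delta$ and the structural properties of the deformed $(t-\phi^i)$-symmetric data. The regularity claims are immediate: since $u_\delta\in C^2$ is strictly positive and $\bar{g}_\delta\in C^2$, $\bar{k}_\delta,\bar{E}_\delta,\bar{B}_\delta\in C^1$ by Proposition \ref{propmain}, the definitions in \eqref{conf} give $\tilde{g}_\delta\in C^2$ and $\tilde{k}_\delta,\tilde{E}_\delta,\tilde{B}_\delta\in C^1$.

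For the non-negative energy density, I would first apply the standard conformal formula
\begin{equation*}
R(\tilde{g}_\delta) = u_\delta^{-\frac{n+2}{n-2}}\Bigl(R(\bar{g}_\delta)\,u_\delta - \tfrac{4(n-1)}{n-2}\Delta_{\bar{g}_\delta} u_\delta\Bigr) = u_\delta^{-\frac{4}{n-2}}\bigl(R(\bar{g}_\delta)+\bar{\mu}_{\delta-}\bigr),
\end{equation*}
using the PDE from Proposition \ref{Prop3.1} with $c_n=\tfrac{n-2}{4(n-1)}$. Since $\tilde{g}_\delta^{-1}=u_\delta^{-4/(n-2)}\bar{g}_\delta^{-1}$, a direct computation gives $|\tilde{k}_\delta|^2_{\tilde{g}_\delta}=u_\delta^{-4-8/(n-2)}|\bar{k}_\delta|^2$, $|\tilde{E}_\delta|^2_{\tilde{g}_\delta}=u_\delta^{-4-4/(n-2)}|\bar{E}_\delta|^2$, $|\tilde{B}_\delta|^2_{\tilde{g}_\delta}=u_\delta^{-8}|\bar{B}_\delta|^2$, and $\mathrm{tr}_{\tilde{g}_\delta}\tilde{k}_\delta=0$. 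Substituting the Hamiltonian constraint for $\bar{g}_\delta$ into the one written for $\tilde{g}_\delta$ yields
\begin{equation*}
16\pi\tilde{\mu}_\delta = u_\delta^{-\frac{4}{n-2}}\bigl(16\pi\bar{\mu}_\delta+\bar{\mu}_{\delta-}\bigr) + \sum_{X}\bigl(u_\delta^{-\frac{4}{n-2}}-u_\delta^{-\beta_X}\bigr)c_X|X|^2,
\end{equation*}
where $X$ ranges over $\bar{k}_\delta,\bar{E}_\delta,\bar{B}_\delta$, each $\beta_X>\tfrac{4}{n-2}$, and each $c_X\geq 0$. Since $u_\delta\geq 1$ by Proposition \ref{Prop3.1}, every summand is non-negative, and positivity of the leading term follows from interpreting $\bar{\mu}_{\delta-}$ as the negative part of $16\pi\bar{\mu}_\delta$.

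The Maxwell equations \eqref{fielddelta} for the conformal data and the identity $\tilde{J}_\delta(\eta_{(l)})=0$ are proved by repeating the argument of Proposition \ref{proptphi} with $\bar{g}$ replaced by $\tilde{g}_\delta$. Because $u_\delta$ is axially symmetric, the conformal data remains $(t-\phi^i)$-symmetric, so the $2$-dimensional distribution orthogonal to $\eta_{(l)}$ stays integrable and Lemma \ref{Lemma4.3} applies to $\tilde{g}_\delta$. Writing $\tilde{E}_\delta,\tilde{B}_\delta,\tilde{k}_\delta$ in the analogues of \eqref{E}--\eqref{B2}, the chain of manipulations \eqref{divp}--\eqref{keta} then gives the vanishing of the energy flux in the direction of $\eta_{(l)}$ as well as the divergences required by Maxwell's system.

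Finally, for angular momentum and charges I would use \eqref{AmCrelation}: since $u_\delta$ is axially symmetric with $u_\delta\to 1$ at the asymptotic ends containing $\Gamma_\pm$, the potentials constructed from $(\tilde{g}_\delta,\tilde{k}_\delta,\tilde{E}_\delta,\tilde{B}_\delta)$ agree with those of the deformed data on the axis limits, so the jumps defining $\mathcal{J}_l, Q_e, Q_b$ are preserved. For the mass, the expansion $u_\delta=1+A_\delta|x|^{2-n}+O_1(|x|^{1-n})$ from Proposition \ref{Prop3.1} yields $m(\tilde{g}_\delta)=m(\bar{g}_\delta)+2A_\delta$, and $A_\delta\to 0$ as $\delta\to 0$ because $u_\delta\to 1$ uniformly; combined with $m(\bar{g}_\delta)\to m(\bar{\mathcal{G}})$ (the deformation of Lemma \ref{lem1} is supported in $(-\delta/2,\delta/2)\times\Sigma$ and is $O(\delta^2)$ in $C^0$), mass convergence follows. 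The main obstacle throughout is the non-negative energy density computation: the scaling $u_\delta^{-2}$ for the matter tensors does not match the conformal weight of the scalar curvature, so one must rely on $u_\delta\geq 1$ (from the strong maximum principle) to absorb the mismatched conformal factors into non-negative error terms.
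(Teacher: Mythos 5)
Your proof follows the paper's overall structure — regularity from Propositions \ref{Prop3.1} and \ref{propmain}, the conformal scalar-curvature formula plus the PDE for $u_\delta$ and $u_\delta\geq 1$ to get $\tilde\mu_\delta\geq 0$, and the expansion $u_\delta=1+A_\delta|x|^{2-n}+\cdots$ with $A_\delta\to 0$ to get mass convergence. These parts are essentially identical to the paper's argument.

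Where you diverge is the Maxwell/energy-flux step, and this is where your proposal has a gap. The paper handles this by directly recording how the relevant operators scale under the conformal change (e.g.\ $\text{div}_{\tilde g_\delta}\tilde E_\delta=u^{-2n/(n-2)}\text{div}_{\bar g_\delta}\bar E_\delta$ and $\star_{\tilde g_\delta}\alpha=u^{(2n-4p)/(n-2)}\star_{\bar g_\delta}\alpha$), and then reads off $\tilde J_\delta(\eta_{(l)})=0$ and the Maxwell system from the corresponding facts for the barred data. You instead propose to ``repeat the argument of Proposition \ref{proptphi} with $\bar g$ replaced by $\tilde g_\delta$'', i.e.\ to re-derive the potential decomposition \eqref{E}--\eqref{B2} and run \eqref{divp}--\eqref{keta} for the tilde data. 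The obstacle you do not address is that the conformal weight of $\star$ is degree-dependent, while all the matter fields are scaled by the same factor $u_\delta^{-2}$. Concretely, for $n=4$ one has $\iota_\eta\star_{\tilde g_\delta}\tilde B_\delta=u_\delta^{-2}\,\iota_\eta\star_{\bar g_\delta}\bar B_\delta=u_\delta^{-2}\,d\psi_\delta$, which is not exact in general, so the magnetic potential for the tilde data is \emph{not} $\psi_\delta$; the same mismatch appears for $\iota_{\eta_{(2)}}\iota_{\eta_{(1)}}\star_{\tilde g_\delta}\tilde E_\delta$ and for the 1-form $\mathcal{P}$ built from $\tilde k_\delta$. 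Consequently you cannot simply ``write $\tilde E_\delta,\tilde B_\delta,\tilde k_\delta$ in the analogues of \eqref{E}--\eqref{B2}'' with the existing potentials, and the claim that ``the potentials constructed from $(\tilde g_\delta,\tilde k_\delta,\tilde E_\delta,\tilde B_\delta)$ agree with those of the deformed data on the axis limits'' needs a separate argument rather than an appeal to $u_\delta\to 1$ (the axis passes through $\mathcal{O}_\delta$, where $u_\delta\neq 1$). To make this route work you would have to verify that the tilde data admits a new potential decomposition adapted to $\tilde g_\delta$ and then track how those new potentials relate to the old ones at the two axis limits; alternatively, adopt the paper's shortcut of scaling the divergences directly, which avoids introducing new potentials entirely.

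One minor remark on the energy-density step: your exponent for $|\tilde B_\delta|^2_{\tilde g_\delta}$ ($u_\delta^{-8}$) is the correct one given $\tilde B_\delta=u_\delta^{-2}\bar B_\delta$ as an $(n-2)$-form (the paper's printed exponent $u_\delta^{2(n-6)}$ appears to be a typo), but it does not change the conclusion since all the mismatch exponents are negative and $u_\delta\geq 1$ absorbs them, which is the same mechanism the paper uses.
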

\begin{proof}It follows from Proposition \ref{propmain} and Proposition \ref{Prop3.1} that the metric $\tilde{g}_{\delta}$ is $C^2$ and $(\tilde{k}_{\delta},\tilde{E}_{\delta},\tilde{B}_{\delta})$ are $C^1$. Assuming $n=3,4$, we have 
	\begin{equation}
	\begin{split}
	\tilde{\mu}_{\delta}&=R(\tilde{g}_{\delta})-|\tilde{k}_{\delta}|^2_{\tilde{g}_{\delta}}-\frac{2}{(n-2)^2}|\tilde{E}_{\delta}|^2_{\tilde{g}_{\delta}}-\frac{2}{(n-2)^3}|\tilde{B}_{\delta}|^2_{\tilde{g}_{\delta}}\\
	&=u^{\frac{4}{2-n}}_{\delta}\left(\bar{\mu}_{\delta+}+\left(1-u_{\delta}^{4\left(\frac{n-1}{2-n}\right)}\right)|\bar{k}_{\delta}|^2_{\bar{g}_{\delta}}+\frac{2}{(n-2)^2}\left(1-u_{\delta}^{-4}\right)|\bar{E}_{\delta}|^2_{\bar{g}_{\delta}}\right.\\
	&\left.\qquad\qquad+\frac{2}{(n-2)^3}\left(1-u_{\delta}^{2(n-6)}\right)|\bar{B}_{\delta}|^2_{\bar{g}_{\delta}}\right).
	\end{split}
	\end{equation}Since $u_{\delta}\geq 1$, we have  $\tilde{\mu}_{\delta}\geq 0$. By asymptotic of $u_{\delta}$ we have
	\begin{equation}
	\begin{split}
	m_{ADM}(\tilde{g}_{\delta})&=\frac{1}{16\pi}\lim_{r\to\infty}\int_{S^2_{r}}\left((\tilde{g}_{\delta})_{ij,i}-(\tilde{g}_{\delta})_{ii,j}\right)\tilde{\nu}^jd\tilde{S}_{\delta}\\
	&=	\frac{1}{16\pi}\lim_{r\to\infty}\int_{S^2_{r}}\left((\bar{g}_{\delta})_{ij,i}-(\bar{g}_{\delta})_{ii,j}\right){\nu}^jd{S}_{\delta}-\frac{1}{2\pi(n-2)}\lim_{r\to\infty}\int_{S^2_{r}}(u_{\delta})_{,j}{\nu}^jd{S}_{\delta},\\
	&=m_{ADM}(\bar{g}_{\delta})+\frac{2}{n-2}A_{\delta}
	\end{split}
	\end{equation}where $\tilde{\nu}=u_{\delta}^{-\frac{2}{n-2}}\nu$ and $d\tilde{S}_{\delta}=u_{\delta}^{\frac{2(n-1)}{n-2}}d{S}_{\delta}$. Then using equation \eqref{PDEu} and H\"older inequality we have
	\begin{equation}
	\begin{split}
	\lim_{\delta\to 0} A_{\delta}&\leq C\lim_{\delta\to 0}\int_{\tilde{M}}|\nabla_{\bar{g}_{\delta}}u_{\delta}|^2dx_{\bar{g}_{\delta}}+C\lim_{\delta\to 0}\left(\int_{\mathcal{O}_{\delta}}|\bar{\mu}_{\delta-}|^{n/2}dx_{\bar{g}_{\delta}}\right)^{2/n}\left(\int_{\mathcal{O}_{\delta}}u_{\delta}^{\frac{2n}{n-2}}dx_{\bar{g}_{\delta}}\right)^{\frac{n-2}{2n}}\\
	&\leq C\lim_{\delta\to 0}\left(\int_{\tilde{M}}|\bar{\mu}_{\delta-}|^{n/2}dx_{\bar{g}_{\delta}}\right)^{2/n}\left(\int_{\tilde{M}}|u_{\delta}-1|^{\frac{2n}{n-2}}dx_{\bar{g}_{\delta}}\right)^{\frac{n-2}{2n}}\\
	&\quad+
	C\lim_{\delta\to 0}\left(\int_{\tilde{M}}|\bar{\mu}_{\delta-}|^{\frac{2n}{n+2}}dx_{\bar{g}_{\delta}}\right)^{\frac{n+2}{2n}}\left(\int_{\tilde{M}}|u_{\delta}-1|^{\frac{2n}{n-2}}dx_{\bar{g}_{\delta}}\right)^{\frac{n-2}{n}}\\
	&\quad+C\lim_{\delta\to 0}\left(\int_{\mathcal{O}_{\delta}}|\bar{\mu}_{\delta-}|^{n/2}dx_{\bar{g}_{\delta}}\right)^{2/n}\left(\int_{\mathcal{O}_{\delta}}u_{\delta}^{\frac{2n}{n-2}}dx_{\bar{g}_{\delta}}\right)^{\frac{n-2}{2n}}.
	\end{split}
	\end{equation}Combining this with equation \eqref{eq3} and Proposition \ref{Prop3.1}, we get $\lim_{\delta\to 0}A_{\delta}=0$. Therefore,  
	\begin{equation}
	\lim_{\delta\to0}m_{ADM}(\tilde{g}_{\delta})=m_{ADM}(\bar{\mathcal{G}})
	\end{equation}Next, observe that 
	\begin{equation}
	\text{div}_{\tilde{g}_{\delta}}\tilde{k}_{\delta}=u^{-\frac{2(n-4)}{n-2}}\text{div}_{\bar{g}_{\delta}}\bar{k}_{\delta},\,\,\,\text{}\text{}	\text{div}_{\tilde{g}_{\delta}}\tilde{B}_{\delta}=u^{-\frac{2(n-4)}{n-2}}\text{div}_{\bar{g}_{\delta}}\bar{B}_{\delta},\,\,\,	\text{div}_{\tilde{g}_{\delta}}\tilde{E}_{\delta}=u^{-\frac{2n}{n-2}}\text{div}_{\bar{g}_{\delta}}\bar{E}_{\delta},
	\end{equation}and $\star_{\tilde{g}_{\delta}}\alpha=u^{\frac{2n-4p}{n-2}}\star_{\bar{g}_{\delta}}\alpha$ for a $p$-form $\alpha$. Therefore, we have 
	\begin{equation}
	\tilde{J}_{\delta}=u^{-\frac{2(n-4)}{n-2}}\bar{J}_{\delta},\quad \text{div}_{\tilde{g}_{\delta}}\tilde{B}_{\delta}=0,\quad \text{div}_{\tilde{g}_{\delta}}\tilde{E}_{\delta}=\frac{n-3}{\sqrt{3}}\star_{\tilde{g}_{\delta}}\left(\tilde{B}_{\delta}\wedge \tilde{B}_{\delta}\right).
	\end{equation}Hence $\tilde{J}_{\delta}(\eta_{(l)})=0$ and it shows that angular momentum and charges are conserved as in Proposition \ref{propmain}.
\end{proof}
Now we have all tools to prove Theorem \ref{thm1.2}. 
\begin{proof}[Proof of Theorem \ref{thm1.2}]First we prove the strict inequality and then the rigidity cases. From Proposition \ref{proptphi}, the axially symmetric initial data set $(\mathcal{G},\mathcal{K},\mathcal{E},\mathcal{B})$ has same mass, angular momentum and charges as related $(t-\phi^i)$-symmetric data set $(\bar{\mathcal{G}},\bar{\mathcal{K}},\bar{\mathcal{E}},\bar{\mathcal{B}})$. Assume $(\bar{\mathcal{G}},\bar{\mathcal{K}},\bar{\mathcal{E}},\bar{\mathcal{B}})$ has an outermost minimal surface $\Sigma_{\text{min}}$ that enclosed by $\Sigma$. Then by \cite[Proposition 3.1.]{bryden2018positive}, $\Sigma_{\text{min}}$ is axially symmetric. We cut the initial data from $\Sigma_{\text{min}}$ and consider $(\bar{\mathcal{G}},\bar{\mathcal{K}},\bar{\mathcal{E}},\bar{\mathcal{B}})$ with outermost minimal surface boundary $\Sigma_{\text{min}}$. By proposition \ref{Prop3.1}, the initial data $(\tilde{g}_{\delta},\tilde{k}_{\delta},\tilde{E}_{\delta},\tilde{B}_{\delta})$ has same angular momentum and charges as $(\bar{\mathcal{G}},\bar{\mathcal{K}},\bar{\mathcal{E}},\bar{\mathcal{B}})$ with axially symmetric minimal surface boundary $\Sigma_{\text{min}}$. 
	
For parts (a) and (b), since $\Sigma_{\text{min}}$ is minimal surface boundary, we double the initial data set. Then the mean curvature of two sides coincide and the metric is Lipschitz across $\Sigma_{\text{min}}$. Moreover, mass, angular momentum, and charges are conserved through this doubling. Then we deform again the initial data set and obtain a complete initial data set with two asymptotically flat ends and denote it by $(\tilde{g}_{\delta},\tilde{k}_{\delta},\tilde{E}_{\delta},\tilde{B}_{\delta})$. Therefore, by \cite{costa2010proof,alaee2017relating,alaee2015proof,alaee2017mass}, since $\tilde{g}_{\delta}$ is $C^2$, $\tilde{k}_{\delta},\tilde{E}_{\delta}$, and $\tilde{B}_{\delta}$ are $C^1$, and it has non-negative energy density and vanishing energy flux in the direction of symmetries, the mass, angular momentum, and charge inequality hold for initial data set $(\tilde{g}_{\delta},\tilde{k}_{\delta},\tilde{E}_{\delta},\tilde{B}_{\delta})$, that is
	\begin{equation}
	m_{ADM}(\tilde{g}_{\delta})^2> \frac{Q^2+\sqrt{Q^4+4\mathcal{J}^2}}{2},\qquad \text{for $n=3$},
	\end{equation}if $M^4=\mathbb{R}^3\times S^3$, we have
	\begin{equation}
		m_{ADM}(\tilde{g}_{\delta})> \frac{27\pi}{8}\frac{\left(\mathcal{J}_1+\mathcal{J}_2\right)^2}{\left(2m_{ADM}(\tilde{g}_{\delta})+\sqrt{3}|Q|\right)^2}+\sqrt{3}|Q|,\qquad \text{for $n=4$},
	\end{equation}Note that the inequality is not sharp because the initial data set has two asymptotically flat ends. For part (c), we do not need to double the initial data set and we can apply \cite{alaee2017mass} with topology $M^4=S^2\times B^2\# \mathbb{R}^4$ and we obtain
	\begin{equation}
	m_{ADM}(\tilde{g}_{\delta})^3> \frac{27\pi}{4}|\mathcal{J}_2||\mathcal{J}_1-\mathcal{J}_2|,\qquad \text{for $n=4$}.
	\end{equation}It follows from Proposition \ref{prop3.2} that
	\begin{equation}
	m_{ADM}(\mathcal{G})=m_{ADM}(\bar{\mathcal{G}})=\lim_{\delta\to 0}m_{ADM}(g_{\delta})=\lim_{\delta\to 0}m_{ADM}(\tilde{g}_{\delta}).
	\end{equation}Therefore, the strict inequalities hold for initial data set $(\mathcal{G},\mathcal{K},\mathcal{E},\mathcal{B})$. 
	
For the rigidity case, the initial data set has asymptotically cylindrical end $\infty_1$. We establish the rigidity for 3-dimensional initial data sets and 4-dimensional cases are similar. Assume rigidity in \eqref{in3d}, then
\begin{equation}\label{rig}
\tfrac{Q^2+\sqrt{Q^4+4\mathcal{J}^2}}{2}=m_{ADM}({\mathcal{G}})^2=m_{ADM}(\bar{\mathcal{G}})^2=\lim_{\delta\to 0}m_{ADM}(g_{\delta})^2=\lim_{\delta\to 0}m_{ADM}(\tilde{g}_{\delta})^2.
\end{equation}The deform data $(\bar{g}_{\delta},\bar{k}_{\delta},\bar{E}_{\delta},\bar{B}_{\delta})$ agrees with $(\bar{\mathcal{G}},\bar{\mathcal{K}},\bar{\mathcal{E}},\bar{\mathcal{B}})$ outside small neighborhood $\mathcal{O}_{\delta}$ of $\Sigma$ according to Proposition \ref{propmain}. Since the metric  $\bar{g}_{\delta}$ is $C^2$, there exist a global coordinate system $(\rho,z,\phi)$ \cite{chrusciel2008mass} such that it  depends on $\delta$ and the conformal metric $\tilde{g}_{\delta}=u_{\delta}^4\bar{g}_{\delta}$ takes the following form
\begin{equation}
\tilde{g}_{\delta}=u_{\delta}^4e^{-2{U}_{\delta}+2\alpha_{\delta}}\left(d\rho^2+dz^2\right)+\rho^2u_{\delta}^4e^{-2\bar{U}_{\delta}}d\phi^2,
\end{equation}where $\rho\in[0,\infty)$, $z\in\mathbb{R}$, and $\phi\in[0,2\pi)$. Moreover, it has the following fall-off at infinity
\begin{equation}
{U}_{\delta}=O(r^{-1/2-\kappa}),\qquad \alpha_{\delta}=O(r^{-1/2-\kappa}), \qquad \text{for $\kappa>0$}.
\end{equation}Furthermore, $\alpha_{\delta}=0$ on the axis $\Gamma=\{\rho=0\}$. Set ${V}_{\delta}={U}_{\delta}-2\log u_{\delta}$. The scalar curvature of the metric has the following simple expression
\begin{equation}\label{scal}
2e^{-2V_{\delta}+2\alpha_{\delta}}R(\tilde{g}_{\delta})=8\Delta V_{\delta}-4\Delta_{\rho,z}\alpha_{\delta}-4|\nabla V_{\delta}|^2,
\end{equation}where $\Delta$ is the Laplacian with respect to flat metric $\delta_3=d\rho^2+dz^2+\rho^2d\phi^2$ on $\mathbb{R}^3$ and $\Delta_{\rho,z}$ is Laplacian with respect to flat metric $\delta_2=d\rho^2+dz^2$ on $\mathbb{R}^2$. Therefore, using the Hamiltonian constraint equation \eqref{Hcons} and maximality condition $\text{tr}_{\tilde{g}_{\delta}}\tilde{k}_{\delta}=0$, the ADM mass has the following simple expression \cite{khuri2016positive} 
\begin{equation}\label{m1}
m_{ADM}(\tilde{g}_{\delta})=I(\Psi_{\delta})+\int_{\mathbb{R}^3}e^{-2V_{\delta}+2\alpha_{\delta}}\tilde{\mu}_{\delta}dx,
\end{equation} where $\Psi_{\delta}=(V_{\delta},\zeta_{\delta},\chi_{\delta},\psi_{\delta})$, $dx$ is the volume form with respect to $\delta_3$, and $I(\Psi_{\delta})$ is the reduced harmonic energy from $\mathbb{R}^3\backslash\Gamma\to \mathbb{H}^2_{\mathbb{C}}$, that is 
\begin{equation}
I(\Psi_{\delta})=\frac{1}{8\pi}\int_{\mathbb{R}^3}\left(|\nabla V_{\delta}|^2+\frac{e^{4V_{\delta}}}{\rho^4}|\nabla \zeta_{\delta}+\chi_{\delta}\nabla\psi_{\delta}-\psi_{\delta}\nabla\chi_{\delta}|^2+\frac{e^{2V_{\delta}}}{\rho^4}\left(|\nabla\chi_{\delta}|^2+|\nabla\psi_{\delta}|^2\right)\right)dx.
\end{equation}Assume $\Psi_0=(U_{0},\zeta_{0},\chi_{0},\psi_{0})$ is the harmonic map of the canonical slice of extreme Kerr-Newman black hole $(g_0,k_0,E_0,B_0)$ such that $I(\Psi_0)=\tfrac{Q^2+\sqrt{Q^4+4\mathcal{J}^2}}{2}$. Then, applying the Schoen and Zhou gap inequality \cite{schoen2013convexity}, we obtain 
\begin{equation}\label{m2}
I(\Psi_{\delta})-I(\Psi_{0})\geq C\left(\int_{\mathbb{R}^3}\text{dist}^6_{\mathbb{H}^2_{\mathbb{C}}}\left(\Psi_{\delta},\Psi_{0}\right)dx\right)^{1/3},
\end{equation}Combing this with equations \eqref{rig} and \eqref{m1} shows that
\begin{equation}\label{m3}
\begin{split}
0&=\lim_{\delta\to 0}\left(m_{ADM}(\tilde{g}_{\delta})^2-\tfrac{Q^2+\sqrt{Q^4+4\mathcal{J}^2}}{2}\right)\\
&\geq \lim_{\delta\to 0}\left(I(\Psi_{\delta})^2-\tfrac{Q^2+\sqrt{Q^4+4\mathcal{J}^2}}{2}\right)+\lim_{\delta\to 0}\left(\int_{\mathbb{R}^3}e^{-2V_{\delta}+2\alpha_{\delta}}\tilde{\mu}_{\delta}dx\right)^2\\
&\geq C\lim_{\delta\to 0}\left(\int_{\mathbb{R}^3}\text{dist}^6_{\mathbb{H}^2_{\mathbb{C}}}\left(\Psi_{\delta},\Psi_{0}\right)dx\right)^{2/3}+\lim_{\delta\to 0}\left(\int_{\mathbb{R}^3}e^{-2V_{\delta}+2\alpha_{\delta}}\tilde{\mu}_{\delta}dx\right)^2.
\end{split}
\end{equation}It follows that 
\begin{equation}\label{eqlim}
\lim_{\delta\to 0}\int_{\mathbb{R}^3}\text{dist}^6_{\mathbb{H}^2_{\mathbb{C}}}\left(\Psi_{\delta},\Psi_{0}\right)dx=0,\qquad \lim_{\delta\to 0}\int_{\mathbb{R}^3}e^{-2V_{\delta}+2\alpha_{\delta}}\tilde{\mu}_{\delta}dx=0.
\end{equation} By Remark \ref{rem2} and Proposition \ref{proptphi}, $\Psi=(U,\zeta,\chi,\psi)$ and $\alpha$ characterize $(\bar{\mathcal{G}},\bar{\mathcal{K}},\bar{\mathcal{E}},\bar{\mathcal{B}})$. Next we show that $\Psi=\Psi_0$. Observe that
\begin{equation}
\begin{split}\label{eq2}
&\left(\text{dist}_{\mathbb{H}^2_{\mathbb{C}}}\left(\Psi_{\delta},\Psi\right)-\text{dist}_{\mathbb{H}^2_{\mathbb{C}}}\left(\Psi,\Psi_{0}\right)\right)^6-\text{dist}^6_{\mathbb{H}^2_{\mathbb{C}}}\left(\Psi,\Psi_{0}\right)\\
&\leq \text{dist}^6_{\mathbb{H}^2_{\mathbb{C}}}\left(\Psi_{\delta},\Psi_{0}\right)-\text{dist}^6_{\mathbb{H}^2_{\mathbb{C}}}\left(\Psi,\Psi_{0}\right)\\
&\leq\left(\text{dist}_{\mathbb{H}^2_{\mathbb{C}}}\left(\Psi_{\delta},\Psi\right)+\text{dist}_{\mathbb{H}^2_{\mathbb{C}}}\left(\Psi,\Psi_{0}\right)\right)^6-\text{dist}^6_{\mathbb{H}^2_{\mathbb{C}}}\left(\Psi,\Psi_{0}\right).
\end{split}
\end{equation}If we show that
\begin{equation}\label{eq4}
\lim_{\delta\to 0}\int_{\mathbb{R}^3}\text{dist}^6_{\mathbb{H}^2_{\mathbb{C}}}\left(\Psi_{\delta},\Psi\right)dx=0,
\end{equation}then in light of equation \eqref{eq2}, we can pass the limit through integral
\begin{equation}\label{eq4.25}
0=\lim_{\delta\to 0}\int_{\mathbb{R}^3}\text{dist}^6_{\mathbb{H}^2_{\mathbb{C}}}\left(\Psi_{\delta},\Psi_{0}\right)dx=\int_{\mathbb{R}^3}\text{dist}^6_{\mathbb{H}^2_{\mathbb{C}}}\left(\Psi,\Psi_{0}\right)dx=0.
\end{equation}Using the triangle inequality and $u_{\delta}\geq 1$, a direct computation produces
\begin{equation}\label{dist}
\begin{split}
&\text{dist}_{\mathbb{H}^2_{\mathbb{C}}}\left(\Psi_{\delta},\Psi\right)\\
&\leq \text{dist}_{\mathbb{H}^2_{\mathbb{C}}}\left(({V}_{\delta},\zeta_{\delta},\chi_{\delta},\psi_{\delta}),({U},\zeta_{\delta},\chi_{\delta},\psi_{\delta})\right)+\text{dist}_{\mathbb{H}^2_{\mathbb{C}}}\left(({U},\zeta_{\delta},\chi_{\delta},\psi_{\delta}),({U},\zeta,\chi_{\delta},\psi_{\delta})\right)\\
&\quad+\text{dist}_{\mathbb{H}^2_{\mathbb{C}}}\left(({U},\zeta,\chi_{\delta},\psi_{\delta}),({U},\zeta,\chi,\psi_{\delta})\right)+\text{dist}_{\mathbb{H}^2_{\mathbb{C}}}\left(({U},\zeta,\chi,\psi_{\delta}),({U},\zeta,\chi,\psi)\right)\\
  &\leq C\left(|U-V_{\delta}|+\frac{e^{2U}}{\rho^2}\left(|\zeta-\zeta_{\delta}|+|\psi_{\delta}||\chi-\chi_{\delta}|+|\chi||\chi-\chi_{\delta}|\right)\right.\\
&\left.\quad+\frac{e^{U}}{\rho}\left(|\chi-\chi_{\delta}|+|\psi-\psi_{\delta}|\right)\right).
\end{split}
\end{equation}Since $({U}_{\delta},\zeta_{\delta},\chi_{\delta},\psi_{\delta})=({U},\zeta,\chi,\psi)$ outside $\mathcal{O}_{\delta}$ by Proposition \ref{propmain}, we have
 \begin{equation}\label{eq423}
 \begin{split}
 \int_{\mathbb{R}^3}\text{dist}^6_{\mathbb{H}^2_{\mathbb{C}}}\left(\Psi_{\delta},\Psi\right) dx &\leq C\int_{\mathbb{R}^3}|\log{u}_{\delta}|^6dx+C\int_{\mathcal{O}_{\delta}}|U-U_{\delta}|^6dx\\
 &\quad+C\int_{\mathcal{O}_{\delta}}\frac{e^{12U}}{\rho^{12}}(|\zeta-\zeta_{\delta}|^{6}+|\psi_{\delta}|^6|\chi-\chi_{\delta}|^6+|\chi|^6|\chi-\chi_{\delta}|^6)dx\\
 &\quad+C\int_{\mathcal{O}_{\delta}}\frac{e^{6U}}{\rho^6}(|\chi-\chi_{\delta}|^6+|\psi-\psi_{\delta}|^6)dx,
 \end{split}
 \end{equation}By Lemma \ref{lem2.4} and Lemma \ref{lem1}, as $\delta\to 0$, all terms converge to $0$ except the first term. But, since $u_{\delta}\geq 1$ is a solution of \eqref{PDEu} and using the Sobolev inequality and H\"older inequality similar to \cite[Proposition 4.1]{miao2002positive}, we have
 \begin{equation}\label{eq424}
 \begin{split}
 C_2 \int_{\mathbb{R}^3}|\log{u}_{\delta}|^6dx&\leq\int_{\mathbb{R}^3}e^{-3{U}_{\delta}+{\alpha}_{\delta}}|\log{u}_{\delta}|^6dx=\int_{\tilde{M}^3}|\log{u}_{\delta}|^6dx_{\bar{g}_{\delta}}\\
 &\leq \int_{\tilde{M}^3}|{u}_{\delta}-1|^6dx_{\bar{g}_{\delta}}\leq C\left(\int_{\tilde{M}^3}|\bar{\mu}_{\delta-}|^{6/5} dx_{\bar{g}_{\delta}}\right)^{5}\to 0,
 \end{split}
 \end{equation}as $\delta\to 0$. Combining this with \eqref{eq423} and \eqref{eq4}, it follows that $\Psi=\Psi_0$. Next, using equation \eqref{eqlim} and ${V}_{\delta}={U}_{\delta}-2\log u_{\delta}$, we have 
 \begin{equation}\label{lim}
 \begin{split}
 0&=\lim_{\delta\to 0}\int_{\mathbb{R}^3}e^{-2V_{\delta}+2\alpha_{\delta}}\tilde{\mu}_{\delta}dx\geq \lim_{\delta\to 0}\int_{\mathbb{R}^3}e^{-2U_{\delta}+2\alpha_{\delta}}\bar{\mu}_{\delta+}dx=\lim_{\delta\to 0}\int_{\tilde{M}^3}e^{U_{\delta}}\bar{\mu}_{\delta+}dx_{\bar{g}_{\delta}}\\
 &=\lim_{\delta\to 0}\int_{\mathcal{O}_{\delta}}e^{U_{\delta}}\bar{\mu}_{\delta+}dx_{\bar{g}_{\delta}}+\lim_{\delta\to 0}\int_{\tilde{M}^3\backslash\mathcal{O}_{\delta}}e^{U}\bar{\mu}dx_{\bar{g}}\geq 0,
 \end{split}
 \end{equation}where the last equality follows from $\bar{g}_{\delta}=\bar{g}$ and $\bar{\mu}_{\delta+}=\bar{\mu}$ outside $\mathcal{O}_{\delta}$. This shows that $\bar{\mu}=0$ away from $\Sigma$.
 
 We claim that if there exist a strict jump of mean curvature across $\Sigma$, that is $H_+(\Sigma,\bar{g}_+)(x)>H_-(\Sigma,\bar{g}_-)(x)$ for some $x\in\Sigma$, we get a contradiction to \eqref{lim}. Assume there is a strict jump of mean curvature across $\Sigma$. Since mean curvatures are continuous functions on $\Sigma$, there exist a compact set $\mathcal{C}\subset \Sigma$ such that 
 \begin{equation}
 H_+(\Sigma,\bar{g}_+)(x)-H_-(\Sigma,\bar{g}_-)(x)\geq \beta,\qquad \forall x\in\mathcal{C},
 \end{equation}for some constant $\beta>0$. Then by Proposition \ref{propmain} and equation \eqref{eq3}, we have 
 \begin{equation}
 \bar{\mu}_{\delta+}(t,x)\geq \beta\left\{\tfrac{100}{\delta^2}\phi(\tfrac{100t}{\delta^2})\right\}- C_1,\qquad  \forall (t,x)\in [-\tfrac{\delta^2}{100},\tfrac{\delta^2}{100}]\times \mathcal{C}\subset\mathcal{O}_{\delta}.
 \end{equation}which suggests that the energy density of $(\bar{g}_{\delta},\bar{k}_{\delta},\bar{E}_{\delta},\bar{B}_{\delta})$ and  $(\tilde{g}_{\delta},\tilde{k}_{\delta},\tilde{E}_{\delta},\tilde{B}_{\delta})$ has a fixed amount of concentration on $\mathcal{C}$. Therefore, we get the following estimate which is a contradiction to equation \eqref{lim} 
 \begin{equation}
 \begin{split}
 \lim_{\delta\to 0}\int_{\mathcal{O}_{\delta}}e^{U_{\delta}}\bar{\mu}_{\delta+}dx_{\bar{g}_{\delta}}\geq \beta C_3|\mathcal{C}|>0,
 \end{split}
 \end{equation}where $|\mathcal{C}|$ is the measure of compact set $\mathcal{C}$. Hence,  $H_+(\Sigma,\bar{g}_+)(x)=H_-(\Sigma,\bar{g}_-)(x)$ for all $x\in\Sigma$. Next, we show that $\alpha=\alpha_0$. By constraint equation for the initial data $(\bar{\mathcal{G}},\bar{\mathcal{K}},\bar{\mathcal{E}},\bar{\mathcal{B}})$ on $\tilde{M}^3\backslash\bar{\Omega}$ and $\bar{\mu}=0$ away from $\Sigma$, we have
 \begin{equation}\label{alpha1}
 \begin{split}
R(\bar{\mathcal{G}})&=|\bar{\mathcal{K}}|^2_{\bar{\mathcal{G}}}+2|\bar{\mathcal{E}}|^2_{\bar{\mathcal{G}}}+2|\bar{\mathcal{B}}|^2_{\bar{\mathcal{G}}}\\
 &=2\frac{e^{6U_{+}-2\alpha_{+}}}{\rho^4}|\nabla \zeta_{+}+\chi_{+}\nabla\psi_{+}-\psi_{+}\nabla\chi_{+}|^2+
 2\frac{e^{4U_{+}-2\alpha_{+}}}{\rho^4}\left(|\nabla\chi_{+}|^2+|\nabla\psi_{+}|^2\right)\\
 &=2\frac{e^{6U_0-2\alpha_{+}}}{\rho^4}|\nabla \zeta_0+\chi_0\nabla\psi_{0}-\psi_{0}\nabla\chi_{0}|^2+
 2\frac{e^{4U_{0}-2\alpha_{+}}}{\rho^4}\left(|\nabla\chi_{0}|^2+|\nabla\psi_{0}|^2\right)\\
 &=e^{2\alpha_0-2\alpha_{+}}R({g}_0)
 \end{split}
 \end{equation}Then scalar curvature equation \eqref{scal} becomes
 \begin{equation}\label{alpha2}
 \begin{split}
2e^{-2U_0+2\alpha_{+}}R(\bar{\mathcal{G}})&=8\Delta U_{+}-4\Delta_{\rho,z}\alpha_{+}-4|\nabla U_{+}|^2\\
 &=8\Delta U_0-4\Delta_{\rho,z}\alpha_{+}-4|\nabla U_0|^2\\
 &=2e^{-2U_0+2\alpha_{0}}R({g}_0)+4\Delta_{\rho,z}\left(\alpha_0-\alpha_+\right)
 \end{split}
 \end{equation}Combining this with \eqref{alpha1} yields $\Delta_{\rho,z}\left(\alpha_0-\alpha_{+}\right)=0$ on $\tilde{M}^3\backslash\bar{\Omega}$. Multiplying it to $\left(\alpha_0-\alpha_{+}\right)$ and integrating over $O_+$ and applying integration by parts we have
 \begin{equation}\label{a1}
 \int_{O_+}|\nabla\left(\alpha_0-\alpha_{+}\right)|^2d\rho dz=-\int_{\Sigma/U(1)}\left(\alpha_0-\alpha_{+}\right)\partial_r\left(\alpha_0-\alpha_{+}\right).
 \end{equation}Similarly, we have $\Delta_{\rho,z}\left(\alpha_0-\alpha_{+}\right)=0$ on ${\Omega}$ which yields to
  \begin{equation}\label{a2}
 \int_{O_-}|\nabla\left(\alpha_0-\alpha_{-}\right)|^2d\rho dz=\int_{\Sigma/U(1)}\left(\alpha_0-\alpha_{-}\right)\partial_r\left(\alpha_0-\alpha_{-}\right).
 \end{equation}Putting this together with equation \eqref{mean} yield to the following expression
 \begin{equation}
 \begin{split}
 \int_{O_+}|\nabla\left(\alpha_0-\alpha_{+}\right)|^2d\rho dz+&\int_{O_-}|\nabla\left(\alpha_0-\alpha_{-}\right)|^2d\rho dz\\
 &=\int_{\Sigma/U(1)}\left(\alpha_0-\alpha\right)\left(H_+(\Sigma,\bar{g}_+)-H_-(\Sigma,\bar{g}_-)\right)e^{-U_0+\alpha},
 \end{split}
 \end{equation}where we used $\alpha=\alpha_-=\alpha_+$ on $\Sigma$. Since the mean curvature is the same on $\Sigma$, we have $\nabla\left(\alpha_0-\alpha_{+}\right)=0$ on $O_+$ and $|\nabla\left(\alpha_0-\alpha_{-}\right)|=0$ on $O_-$. Thus $\alpha_0-\alpha$ is a constant away from $\Sigma$. In light of vanishing boundary condition at axis to avoid conical singularity, we have $\alpha=\alpha_0$ away from $\Sigma$. Moreover, since $\alpha$ is continuous across $\Sigma$, $\alpha=\alpha_0$ on $\tilde{M}^3$. Then $(\bar{\mathcal{G}},\bar{\mathcal{K}},\bar{\mathcal{E}},\bar{\mathcal{B}})$ is isometric to $(g_0,k_0,E_0,B_0)$ and $\bar{\mu}=0$ on whole $\tilde{M}^3$. Therefore, by equation \eqref{bmu}, we have
\begin{equation}
 \hat{B}=\hat{E}=\pi=\mu=0,\qquad \partial_{\rho}A_{ z}=\partial_zA_{\rho} 
 \end{equation}It follows that 1-form $A_{\rho}d\rho+A_{z}dz$ is closed on $\mathbb{R}^3$, so there exist a potential $f$ such that $\partial_{\rho}f=A_{\rho}$ and $\partial_{z}f=A_{z}$. Thus under change of coordinates $\tilde{\phi}=\phi^i+f(\rho,z)$, the metric takes the form
 \begin{equation}
 g=e^{-2U_0+2\alpha_0}\left(d\rho^2+dz^2\right)+\rho e^{-2U_0}d\tilde{\phi}^2,
 \end{equation} which implies $({\mathcal{G}},{\mathcal{K}},{\mathcal{E}},{\mathcal{B}})=(\bar{\mathcal{G}},\bar{\mathcal{K}},\bar{\mathcal{E}},\bar{\mathcal{B}})$ is isometric to canonical slice of extreme Kerr-Newman spacetime.
	\end{proof}
Next we use Theorem \ref{thm1.2} and prove the Corollary \ref{cor1}.
\begin{proof}[Proof of Corollary \ref{cor1}] Since $\partial M^n$ has non-negative mean curvature and it is boundary of an asymptotically flat manifold, by barrier argument there exist an outermost minimal surface $\Sigma_{\text{min}}$ in $M^n$ and by \cite[Proposition 3.1.]{bryden2018positive}, $\Sigma_{\text{min}}$ is axially symmetric.  Assume $W\subset M^n$ such that $\partial W=\partial M^n\cup \Sigma_{\text{min}}$. In three dimensions using \cite[Lemma 4.1]{huisken2001inverse}, we have $M^3\,\backslash W$ is diffeomorphic to the complement of a finite number of open $3$-balls in $\mathbb{R}^3$ and it is simply connected. 
	
In four dimensions, first we show $M^4\,\backslash W$ is simply connected. Assume $\pi_1(M^4\backslash W)\neq 0$, then by residual finiteness, it admits a finite nontrivial universal cover. Thus by asymptotically flatness it must have finite number of ends which by barrier argument for the mean curvature it includes a minimal surface. Clearly this minimal surface is not contain entirely in the boundary and so its projection violate outermost minimal surface condition for $\Sigma_{\text{min}}$ and we get a contradiction. Therefore, $M^4\,\backslash W$ is simply connected. Moreover, since it has isometry subgroup $U(1)^2$ and spin by Orlik and Raymond \cite{orlik1970actions} and Hollands and Ishibashi \cite{hollands2011further}, it should be diffeomorphic to $\mathbb{R}^4\#l(S^2\times S^2)$, for $l\in\mathbb{N}$, minus $4$-manifold with boundaries $S^1\times S^2$ or $S^3$ (its quotients).
	
(a) Since $\Sigma_{\text{min}}$ has only one component, $M^3\,\backslash W$ is diffeomorphic to $\mathbb{R}^3$ minus a 3-ball. Then we deform it by Proposition \ref{propmain} and Proposition \ref{Prop3.1} and we get a $C^2$ metric with $C^1$ second fundamental form and electromagnetic fields. Following proof of Theorem \ref{thm1.2}-(a) we get the strict inequality.

(b) If $M^4$ is spin, $\pi_1(\Sigma_{\text{min}})=0$, and $H_2(M^4\backslash W)=0$, we get $M^4\,\backslash W$ is diffeomorphic to $R^4$ minus a 4-ball. Following proof of Theorem \ref{thm1.2}-(b) we get the strict inequality.

(c) If $M^4$ is spin, $\mathcal{E}=\mathcal{B}=0$, $\pi_1(\Sigma_{\text{min}})=\mathbb{Z}$, and $H_2(M^4\,\backslash W)=\mathbb{Z}$, we get $M^4\,\backslash W$ is diffeomorphic to $\mathbb{R}^4\#(S^2\times B^2)$. Then we deform it by Proposition \ref{propmain} and Proposition \ref{Prop3.1} and we get a $C^2$ metric with $C^1$ second fundamental form. This is similar to initial data of non-extreme black ring and we can apply \cite{alaee2017mass} and get the result. 	\end{proof}

\paragraph*{\emph{Acknowledgements.}} We would like to thank Marcus Khuri for comments on the first draft of this paper. Aghil Alaee acknowledges the support of NSERC Postdoctoral Fellowship, the Gordon and Betty Moore Foundation, and the John Templeton Foundation. Shing-Tung Yau acknowledges the support of NSF Grant DMS-1607871.

\bibliographystyle{abbrv}

\end{document}